\newtheorem{theorem}{Theorem}
\newtheorem{lemma}{Lemma}
\newtheorem{proposition}{Proposition}
\newtheorem{example}{Example}
\renewcommand{\epsilon}{\varepsilon}
\newcommand{\eps}{\varepsilon}
\newcommand{\dr}{\, dr}
\DeclareMathOperator{\Id}{Id}
\newcommand{\prts}[1]{\left(#1\right)}
\newcommand{\prtsr}[1]{\left[#1\right]}
\newcommand{\pfrac}[2]{\prtsr{\dfrac{#1}{#2}}}
\newcommand{\abs}[1]{\left|#1\right|}
\newcommand{\set}[1]{\left\{#1\right\}}
\newcommand{\norm}[1]{\left\|#1\right\|}
   \newcommand{\N}{\ensuremath{\mathds N}}
   \newcommand{\R}{\ensuremath{\mathds R}}
\def\cB{\EuScript{B}}
\def\cV{\EuScript{V}}
\def\cX{\EuScript{X}}
\begin{document}
\title[Nonautonomous equations, generalized dichotomies ...]
   {Nonautonomous equations, generalized dichotomies and stable manifolds}
\author{Ant\'onio J. G. Bento}
\address{A. J. G. Bento\\
   Departamento de Matem\'atica\\
   Universidade da Beira Interior\\
   6201-001 Covilh\~a\\
   Portugal}
\email{bento@mat.ubi.pt}
\author{C\'esar Silva}
\address{C. Silva\\
   Departamento de Matem\'atica\\
   Universidade da Beira Interior\\
   6201-001 Covilh\~a\\
   Portugal}
\email{csilva@mat.ubi.pt}
\urladdr{www.mat.ubi.pt/~csilva}
\date{\today}
\subjclass{}
\keywords{}
\begin{abstract}
   Assuming the existence of a general nonuniform dichotomy for the evolution
   operator of a non-autonomous ordinary linear differential equation in a
   Banach space, we establish the existence of invariant stable manifolds for
   the semiflow generated by sufficiently small nonlinear perturbations of the
   linear equation. The family of dichotomies considered satisfies a general
   growth rate given by some increasing differentiable function, allows
   situations for which the classical Lyapunov exponents are zero, and contains
   the nonuniform exponential dichotomies as a very particular case. In
   addition we also give explicit examples of linear equations that admit all
   the possible considered dichotomies.
\end{abstract}
\maketitle
\section{Introduction}
The existence of invariant manifolds is an important subject in the theory of
dynamical systems and differential equations. The fundamental tools to
establish the existence of invariant manifolds are, for dynamical systems, the
notion of nonuniform hiperbolicity and, for differential equations, the related
notion of nonuniform dichotomy.

The concept of nonuniform hyperbolicity, introduced by
Pesin~\cite{Pesin-IANSSSR-1976,Pesin-UMN-1977,Pesin-IANSSSR-1977}, is a
generalization of the classical concept of (uniform) hyperbolicity where the
rates of expansion and contraction are allowed to vary from point to point. For
nonuniformly hyperbolic trajectories, Pesin~\cite{Pesin-IANSSSR-1976} was able
to obtain a stable manifold theorem in the finite-dimensional setting. Since
then, there were several contributions to the theory. Namely, in
\cite{Ruelle-IHESPM-1979} Ruelle gave a proof of the stable manifold theorem
based on the study of perturbations of products of matrices occurring in
Oseledets' multiplicative ergodic theorem in~\cite{Oseledets-TMMS-1968}.
Another proof, using graph transform techniques and based on the classical work
of Hadamard, was given by Pugh and Shub in~\cite{Pugh-Shub-TAMS-1989}.
Following his approach in \cite{Ruelle-IHESPM-1979}, Ruelle
in~\cite{Ruelle-AM-1982} proved a stable manifold theorem in Hilbert spaces
under some compactness assumptions. In~\cite{Mane-LNM-1983} Ma\~n\'e obtained a
corresponding version for transformations in Banach spaces under some
compactness and invertibility assumptions, that includes the case of
differentiable maps with compact derivative at each point, and, in
\cite{Thieullen-AIHPAN-1987}, Thieullen  generalized the results of Ma\~n\'e
for a family of transformations satisfying some asymptotic compactness.

On the other hand, in the setting of nonautonomous differential equations,
Barreira and Valls~\cite{Barreira-Valls-JDE-2006,Barreira-Valls-CMP-2005}
introduced the notion of nonuniform exponential dichotomy based on the
classical notion of exponential dichotomy  introduced by Perron
in~\cite{Perron-MZ-1930} and also in the notion of nonuniformly hyperbolic
trajectory introduced by Pesin
in~\cite{Pesin-IANSSSR-1976,Pesin-UMN-1977,Pesin-IANSSSR-1977}. Versions of the
stable manifold theorems for nonuniformly exponential dichotomies were also
obtained, both in the continuous and the discrete time settings. In fact, for
flows and semiflows arising from nonautonomous ordinary differential equations,
Barreira and Valls were able to obtain stable manifold theorems in several
contexts. For more details about the stability theory of nonautononous
differential equations with nonuniform exponential dichotomies and, in
particular, the existence of invariant manifolds, the reader can consult the
book~\cite{Barreira-Valls-LNM-2008}. Corresponding results were also obtained
in the discrete time setting, namely  in~\cite{Barreira-Valls-DCDS-2006},
Barreira and Valls obtained $C^1$ stable manifolds for nonuniformly exponential
dichotomies in finite dimension and, using this result as a starting point,
in~\cite{Barreira-Silva-Valls-JDDE-2008} it was established the existence of
$C^k$ local manifolds for $C^k$ perturbations by an induction process that uses
a linear extension of the dynamics. For Banach spaces, assuming a nonuniform
exponential dichotomy, it was established
in~\cite{Barreira-Silva-Valls-JLMS-2008} the existence of $C^1$ global stable
manifolds for some perturbations of linear dynamics.

The purpose of this paper is to obtain global stable manifolds for
perturbations of linear ordinary differential equations, assuming some general
type of dichotomy for the evolution operator associated with the linear
equation. This dichotomies bound the norms of the evolution operator by a
nonuniform law that is not necessarily exponential. In fact, the dichotomies
considered (see~\eqref{eq:dich-1} and~\eqref{eq:dich-2}) contain the nonuniform
exponential dichotomies as a very particular case. The existence of global
stable manifolds for perturbations of linear ordinary differential equations
with nonuniform dichotomies that are not exponential was already addressed
in~\cite{Bento-Silva-JDE} for the particular case of polynomial dichotomies
(more precisely, in that paper the dichotomies considered correspond to the
dichotomies obtained in the present case by setting $\mu(t)=t+1$
in~\eqref{eq:dich-1} and~\eqref{eq:dich-2}). In the discrete time setting, this
problem was discussed in~\cite{Bento-Silva-JFA-2009} for perturbations of some
nonuniform polynomial dichotomies.

The perturbations considered here include as a particular case the ones
considered in~\cite{Barreira-Valls-DCDS-2008} and~\cite{Bento-Silva-JDE}.
Therefore, the theorems proved there, respectively for nonuniform exponential
dichotomies and for nonuniform polynomial dichotomies, are particular cases of
the result obtained in this paper. We emphasize that, contrarily to what
happens in the exponential case, we need to prove
inequality~\eqref{eq:norm_x_phi-x_psi} without using Gronwall's Lemma. This was
done in Lemma~\ref{lemma:without-Gronwall} by mathematical induction.

Notice that the family of dichotomies considered in this paper includes cases
where the Lyapunov exponent considered in \cite{Barreira-Valls-LNM-2008} for
Hilbert spaces is zero for all $v \in E_1$ (see Section \ref{section:NP} for
the definition of $E_1$). In fact, we only have finite nonzero Lyapunov
exponent if the growth rate is, is some sense, close to exponential.

On the other hand, it is strait-forward to see that for a linear equation
$u'=A(t)u$ and initial conditions $u_0$ the map $\chi:X \to [-\infty,+\infty]$
given by
   $$ \chi(u_0)=\limsup_{n \to +\infty} \frac{\log \|u(t)\|}{\log \mu(t)}$$
is a Lyapunov exponent in the sense of the abstract theory developed
in~\cite{Bylov-Vinograd-Grobman-Nemyckii-1966}.
In~\cite{Barreira-Valls-DCDS-2008-1} Barreira and Valls used these Lyapunov
exponents to establish the existence of generalized trichotomies for linear
equations in finite dimension, assuming that the matrices $A(t)$ are in block
form,  and also that
   $$ \lim_{t \to +\infty} \frac{\log t}{\log \mu(t)}=0,$$
where the increasing functions $\mu$ are the growth rates considered there.

In another direction, in a recent work, Barreira and
Valls~\cite{Barreira-Valls-JFA-2009} established the persistence of generalized
dichotomies under small linear perturbations.  The dichotomies they consider
correspond to the dichotomies considered here though the definition is given in
slightly different manner, namely our definition correspond to the one given
there setting $\mu(t)=e^{\rho(t)}$ (compare~\eqref{eq:dich-1}
and~\eqref{eq:dich-2} with Section 3 of~\cite{Barreira-Valls-JFA-2009}).

The content of the paper is as follows: in section~\ref{section:NP}
we establish the setting and define the dichotomies to be considered;
then, in section~\ref{section:E}, we present, for each growth rate
$\mu$ in our family of growth rates, examples of nonuniform
$\mu$-dichotomies that are not uniform $\mu$-dichotomies;
finally in section~\ref{section:global} we prove our result on the existence
of
stable manifolds for a large family of sufficiently small perturbations of
the linear differential equations considered.

\section{Notation and Preliminaries}\label{section:NP}
Let $B(X)$ be the space of bounded linear operators acting on a Banach
space~$X$. We are going to consider the initial value problem
\begin{equation} \label{eq:ivp-li}
   v' = A(t) v, \ v(s) = v_s
\end{equation}
with $s \ge 0$ and $v_s \in X$ and where $A \colon \R^+_0 \to B(X)$ is a $C^1$
function. We assume that each solution of~\eqref{eq:ivp-li} is global and we
denote by $T(t,s)$ the evolution operator associated with~\eqref{eq:ivp-li},
i.e., $v(t) = T(t,s)v_s$ for $t \ge 0$.

Let $\mu : \R^+_0 \to [1, +\infty[$ be an increasing differentiable function
such that
   $$ \lim_{t \to + \infty} \mu(t) = +\infty.$$

We say that equation \eqref{eq:ivp-li} admits a \textit{nonuniform
$\mu$-dichotomy} in $\R^+_0$ if, for each $t \ge 0$, there are projections
$P(t)$
such that
\begin{equation*}
   P(t)T(t,s) = T(t,s) P(s), \ t, s \ge 0
\end{equation*}
and constants $D \ge 1$, $a < 0 \le b$ and $\eps \ge 0$ such that, for every
$t
\ge s \ge 0$,
\begin{align}
   & \| T(t,s)P(s)\|
      \le D \pfrac{\mu(t)}{\mu(s)}^a \mu(s)^\eps,\label{eq:dich-1}\\
   & \|T(t,s)^{-1} Q(t)\|
      \le D \pfrac{\mu(t)}{\mu(s)}^{-b} \mu(t)^\eps, \label{eq:dich-2}
\end{align}
where $Q(t)=\Id-P(t)$ is the complementary projection. When $\eps = 0$ we say
that we have a \textit{uniform $\mu$-dichotomy} or simply a
\textit{$\mu$-dichotomy}. We define for each $t \ge 0$ the linear subspaces
   $$ E(t)=P(t)X \quad \text{ and } \quad F(t)=Q(t)X.$$
Without loss of generality, we always identify the spaces $E(t) \times F(t)$
and $E(t) \oplus F(t)$ as the same space and in these spaces we use the
norm given by
   $$ \|(x,y)\| = \|x\|+\|y\|, \ \ (x,y)\in E(t) \times F(t).$$
Hence, the unique solution of~\eqref{eq:ivp-li} can be written in the form
   $$ v(t) = \prts{U(t,s) \xi, V(t,s) \eta}, \ t \ge s$$
where $v_s = \prts{\xi, \eta} \in E(s) \times F(s)$ and
   $$ U(t,s) := P(t) T(t,s) P(s) \quad \text{ and } \quad
      V(t,s) := Q(t) T(t,s) Q(s).$$
\section{Examples}\label{section:E}

In the following family of examples we are going to present nonautonomous
linear equations that admits a nonuniform $\mu$-dichotomy for each possible
function $\mu$.

\begin{example}
   Given $\eps > 0$ and $a < 0 \le b$, consider the differential equation in
   $\R^2$ given by
   \begin{equation} \label{eq:example}
      \begin{split}
         & u' = \prts{a \dfrac{\mu'(t)}{\mu(t)}
            + \omega \dfrac{\mu'(t)}{\mu(t)} (\cos t -1)
            - \omega \log \mu(t) \sin t} u\\
         & v' = \prts{b \dfrac{\mu(t)}{\mu'(t)}
            - \omega \dfrac{\mu'(t)}{\mu(t)} (\cos t -1)
            + \omega \log \mu(t) \sin t} v
      \end{split}
   \end{equation}
   where $\omega = \eps/2$. The evolution operator associated with this
   equation is given by
      $$ T(t,s)(u,v) = (U(t,s)u, V(t,s)v),$$
   where
   \begin{align*}
      & U(t,s) = \pfrac{\mu(t)}{\mu(s)}^a
         e^{\omega \log \mu(t) (\cos t -1) - \omega \log \mu(s) (\cos s
         -1)},\\
      & V(t,s) = \pfrac{\mu(t)}{\mu(s)}^b
         e^{-\omega \log  \mu(t) (\cos t -1) + \omega \log \mu(s) (\cos s
         -1)}.
   \end{align*}
   Let $P(t) \colon \R^2 \to \R^2$ be the projections defined by $P(t)(u,v)
   =(u,0)$ and $Q(t) = \Id - P(t)$. Then we have
   \begin{align*}
      & \| T(t,s)P(s)\|
         = \abs{U(t,s)}
         \le \pfrac{\mu(t)}{\mu(s)}^a \mu(s)^\eps\\
      & \|T(t,s)^{-1} Q(t)\|
         = \abs{V(t,s)^{-1}}
         \le \pfrac{\mu(t)}{\mu(s)}^{-b} \mu(t)^\eps
   \end{align*}
   and this shows that equation~\eqref{eq:example} admits a nonuniform
   $\mu$-dichotomy.

   Furthermore, since
      $$ U(2 k \pi,2k \pi - \pi)
         = \pfrac{\mu(2 k \pi)}{\mu(2k \pi -\pi)}^a
            \prts{\mu(2k \pi -\pi)}^\eps, \ \  k \in \N,$$
   the nonuniform part can not be removed.
\end{example}

The examples in~\cite{Bento-Silva-JDE} are the particular case of this
   family of examples obtained by setting $\mu(t)=t+1$.

\section{Stable manifolds} \label{section:global}
The purpose of this paper is the obtention of stable manifolds for the
nonlinear problem
\begin{equation} \label{eq:ivp-nonli}
   v' = A(t) v + f(t,v), \ v(s) = v_s
\end{equation}
when equation~\eqref{eq:ivp-li} admits a nonuniform $\mu$-dichotomy and $f
\colon \R^+_0 \times X \to X$ is a perturbation of class $C^1$ and there exists
$\delta>0$ such that, for every $t \ge 0$ and $u,v\in X$,
\begin{align}
   & f(t,0)=0,\quad \partial f(t,0)=0, \label{cond-f-0}\\
   & \|\partial f(t,u)\|
      \le \delta \mu'(t) \mu(t)^{-3 \eps - 1}, \label{cond-f-1}\\
   & \|\partial f(t,u)- \partial f(t,v)\|
      \le \delta \mu'(t) \mu(t)^{-3 \eps - 1} \|u-v\|, \label{cond-f-2}
\end{align}
where, for a question of simplicity, $\partial$ denotes the partial derivative
with respect to the second variable and $\mu(t)$ and $\eps$ are the same as
in~\eqref{eq:dich-1} and~\eqref{eq:dich-2}. A trivial application of the mean
value theorem combined with~\eqref{cond-f-1} yields
\begin{equation} \label{cond-f-3}
   \|f(t,u) - f(t,v)\| \le \delta \mu'(t) \mu(t)^{-3\eps-1} \|u - v\|
\end{equation}
for every $u, v \in X$ and, with $v= 0$, equation~\eqref{cond-f-3} becames
\begin{equation} \label{cond-f-4}
   \|f(t,u)\| \le \delta \mu'(t) \mu(t)^{-3\eps-1} \|u\|.
\end{equation}

For
\begin{equation} \label{def:G}
   G = \bigcup_{t \ge 0} \set{t} \times E(t)
\end{equation}
we define the space $\cX$ of $C^1$ functions $\phi \colon G \to X$
such that
\begin{align}
   & \phi(s,\xi) \in F(s), \label{cond-phi-in-F(s)}\\
   & \phi(s,0)=0,\quad \partial \phi(s,0)=0, \label{cond-phi-0}\\
   & \|\partial\phi(s,\xi)\| \le 1, \label{cond-phi-1}\\
   & \|\partial \phi(s,\xi) - \partial \phi(s,\bar\xi)\|
      \le \|\xi-\bar\xi\|, \label{cond-phi-2}
\end{align}
for every $(s,\xi), (s,\bar\xi) \in G$. By the mean value theorem
and~\eqref{cond-phi-1} we have
\begin{equation} \label{cond-phi-3}
   \|\phi(s,\xi) - \phi(s,\bar\xi)\| \le \|\xi-\bar\xi\|
\end{equation}
for every $(s,\xi), (s,\bar\xi) \in G$ and putting $\bar\xi= 0$
in~\eqref{cond-phi-3} we get
\begin{equation*}
   \|\phi(s,\xi)\| \le \|\xi\|
\end{equation*}
for every $(s,\xi) \in G$.

For each $\phi \in \cX$ we define the graph
\begin{equation} \label{def:V_phi}
   \cV_\phi
   = \set{\prts{s,\xi, \phi(s,\xi)} \colon (s,\xi) \in G}.
\end{equation}

Writing the unique solution of \eqref{eq:ivp-nonli} in the form
   $$ (x(t,s,v_s),y(t,s,v_s)) \in E(t) \times F(t),$$
where $v_s = (\xi, \eta) \in E(s) \times F(s)$, we define for each $\tau \ge 0$
the semiflow given by
\begin{equation} \label{def:Psi}
   \Psi_\tau(s,v_s) = \prts{s+\tau, x(s+\tau,s,v_s), y(s+\tau, s, v_s)}.
\end{equation}

Now we will formulate our theorem on the existence of global stable manifolds.

\begin{theorem}\label{thm:global}
   Let $X$ be a Banach space, assume that equation \eqref{eq:ivp-li} admits a
   nonuniform $\mu$-dichotomy in $\R^+_0$ for some $D \ge 1$, $a < 0 \le
   b$ and $\eps > 0$, and let $f \colon \R^+_0 \times X \to X$ be a function
   satisfying \eqref{cond-f-0},~\eqref{cond-f-1} and~\eqref{cond-f-2} for some
   $\delta > 0$. If
   \begin{equation} \label{eq:a+eps<b}
      a + \eps < b,
   \end{equation}
   then, choosing $\delta > 0$ sufficiently small, there exists a unique
   function $\phi \in \cX$ such that
   \begin{equation} \label{eq:invariance}
      \Psi_\tau(\cV_\phi) \subseteq \cV_\phi
   \end{equation}
   for every $\tau \ge 0$, where $\Psi_\tau$ is given by~\eqref{def:Psi} and
   $\cV_\phi$ is given by~\eqref{def:V_phi}. Furthermore,
   \begin{enumerate}[$a)$]
      \item $\cV_\phi$ is a $C^1$ manifold with $T_{(s,0)} \cV_\phi
         = \R \times E(s)$ for $s \ge 0$;
      \item there is $K > 0$ such that for every $(s, \xi), \ (s, \bar
          \xi) \in G$ and $t \ge s$ we have
          \begin{align}
            & \| \Psi_{t-s}(p_{s,\xi}) - \Psi_{t-s}(p_{s,\bar\xi})\|
               \le K \pfrac{\mu(t)}{\mu(s)}^a \mu(s)^\eps
               \|\xi - \bar\xi\| \label{eq:thm:global-1}\\
            & \| \partial \prts{\Psi_{t-s}(p_{s,\xi})}
               - \partial \prts{\Psi_{t-s}(p_{s,\bar\xi})}\|
               \le K \pfrac{\mu(t)}{\mu(s)}^a \mu(s)^{2\eps}
               \|\xi - \bar\xi\| \label{eq:thm:global-2}
          \end{align}
          where $p_{s,\xi} = (s, \xi, \phi(s,\xi))$.
   \end{enumerate}
\end{theorem}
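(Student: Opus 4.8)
The plan is to realize $\phi$ as a fixed point of a graph transform operator acting on the complete metric space $\cX$, following the classical Lyapunov--Perron scheme adapted to the $\mu$-dichotomy. For $\phi \in \cX$ and $(s,\xi) \in G$, consider the integral equation associated with \eqref{eq:ivp-nonli}: a solution $(x(t),y(t))$ lying in $\cV_\phi$ for all $t \ge s$ must satisfy the forward equation for the stable ($E(t)$) component and the backward, convergent-at-$+\infty$ equation for the unstable ($F(t)$) component, namely
\begin{align*}
   & x(t) = U(t,s)\xi + \int_s^t U(t,r)\,P(r)f(r,x(r),y(r))\dr,\\
   & y(t) = -\int_t^{+\infty} V(t,r)^{-1}Q(r)f(r,x(r),y(r))\dr,
\end{align*}
with the constraint $y(t) = \phi(t,x(t))$. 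First I would fix $(s,\xi)$ and, viewing $\phi$ as given, solve the coupled system for $(x,y)$ on $[s,+\infty)$ by a contraction argument in a weighted sup-norm space with weight $\mu(t)^{-a}\mu(s)^{-\eps}$; here the bounds \eqref{eq:dich-1}, \eqref{eq:dich-2} together with \eqref{cond-f-1}, the hyperbolicity gap \eqref{eq:a+eps<b}, and the smallness of $\delta$ make the relevant integrals $\int_s^{+\infty}(\mu(t)/\mu(r))^a \mu'(r)\mu(r)^{-3\eps-1}\,dr$ and its unstable counterpart convergent and small. This yields a well-defined $x = x_\phi(t,s,\xi)$, $y = y_\phi(t,s,\xi)$, and I then set $(\Phi\phi)(s,\xi) := y_\phi(s,s,\xi)$. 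Invariance \eqref{eq:invariance} is built into the construction, and $\phi$ is the sought function precisely when $\Phi\phi = \phi$.

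Next I would verify that $\Phi$ maps $\cX$ into itself: properties \eqref{cond-phi-in-F(s)} and \eqref{cond-phi-0} are immediate from the formula and \eqref{cond-f-0}, while the Lipschitz bounds \eqref{cond-phi-1} and \eqref{cond-phi-2} on $\partial\phi$ require differentiating the fixed-point equations in $\xi$ (justified by the $C^1$ dependence on initial data and the uniform estimates), obtaining integral equations for $\partial_\xi x$ and $\partial_\xi y$, and re-running the contraction estimates — the smallness of $\delta$ absorbs the constants $D$ and forces the Lipschitz constants below $1$. Then $\Phi$ is a contraction on $\cX$ in the $C^0$ metric $d(\phi,\bar\phi) = \sup\{\|\phi(s,\xi)-\bar\phi(s,\xi)\|/\|\xi\|\}$ by comparing the two integral systems, so the Banach fixed point theorem gives existence and uniqueness of $\phi$. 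Part $a)$ follows because $\cV_\phi$ is the graph of the $C^1$ map $\phi$ and \eqref{cond-phi-0} pins down the tangent space at $(s,0)$; part $b)$ follows by taking two initial conditions $\xi,\bar\xi$ in the estimates already obtained for $x_\phi, y_\phi$ and their $\xi$-derivatives, which produce exactly the decay rates $(\mu(t)/\mu(s))^a\mu(s)^\eps$ and $(\mu(t)/\mu(s))^a\mu(s)^{2\eps}$ in \eqref{eq:thm:global-1}, \eqref{eq:thm:global-2}.

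The main obstacle, as the authors themselves flag in the introduction, is that in the non-exponential setting one cannot invoke Gronwall's inequality to close the estimate for the difference $\|x_\phi(t,s,\xi) - x_{\bar\phi}(t,s,\bar\xi)\|$ (the inequality labelled \eqref{eq:norm_x_phi-x_psi}): the kernel $\mu'(r)\mu(r)^{-3\eps-1}$ is not of the form needed for the classical integral form of Gronwall with an exponential growth comparison. Instead I would establish the required a priori bound by the mathematical-induction device announced as Lemma~\ref{lemma:without-Gronwall}, iterating the integral inequality and summing a convergent series whose ratio is controlled by $\delta$; this replaces the Gronwall step throughout and is what makes the weighted contraction estimates close. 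The remaining technical points — convergence of the improper $F(t)$-integral, $C^1$-smoothness and the differentiation-under-the-integral steps, and tracking the exact powers of $\mu(s)$ through the nonuniform factors $\mu(s)^\eps$, $\mu(t)^\eps$ in \eqref{eq:dich-1}--\eqref{eq:dich-2} — are routine once the no-Gronwall lemma is in hand.
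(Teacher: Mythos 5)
Your proposal is correct and follows essentially the same route as the paper: an inner contraction in a weighted space (the paper's $\cB_s$) producing $x_\phi$ for each $\phi\in\cX$ with the graph constraint $y=\phi(\cdot,x(\cdot))$ substituted into the forward equation, the induction argument of Lemma~\ref{lemma:without-Gronwall} replacing Gronwall's lemma, and an outer contraction for the operator $(\Phi\phi)(s,\xi)=-\int_s^{+\infty}V(r,s)^{-1}f_{x_\phi,\phi}(r,\xi)\,dr$ on $\cX$, with the gap condition \eqref{eq:a+eps<b} ensuring convergence of the improper integrals and the decay needed for parts $a)$ and $b)$. The only step you pass over quickly --- that a fixed point of this backward-integral equation actually gives the forward invariance \eqref{eq:invariance} for all $t\ge s$ --- is the paper's Lemma~\ref{lemma:global:equiv}, a routine verification using the semiflow property of $(s,\xi)\mapsto(s+r,x_\phi(s+r,\xi))$ and the vanishing of $V(t,s)^{-1}\phi_{x_\phi}(t,\xi)$ as $t\to+\infty$.
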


Since problem \eqref{eq:ivp-nonli} is equivalent to the problem
\begin{align}
   & x(t) = U(t,s) \xi + \int_s^t U(t,r) f(r,x(r),y(r)) \dr,
      \label{eq:split-1a}\\
   & y(t) = V(t,s) \eta + \int_s^t V(t,r) f(r,x(r),y(r)) \dr,
   \label{eq:split-1b}
\end{align}
to prove the invariance in~\eqref{eq:invariance} we should have
\begin{align}
   & x(t,\xi) = U(t,s) \xi  + \int_s^t U(t,r)
      f(r,x(r,\xi),\phi(r,x(r,\xi))) \dr, \label{eq:split-2a}\\
   & \phi(t,x(t,\xi)) = V(t,s) \phi(s,\xi) + \int_s^t V(t,r)
      f(r,x(r,\xi),\phi(r,x(r,\xi))) \dr \label{eq:split-2b}
\end{align}
for every $s \ge 0$, every $t \ge s$ and every $\xi \in E(s)$.

The proof of the theorem goes as follows: in Lemma~\ref{lemma:global:aux1} we
prove, using Banach fixed point theorem in a suitable space $\cB_s$ of
functions, that for every $\phi \in \cX$, there is a unique function $x_\phi
\in \cB_s$ verifying~\eqref{eq:split-2a}; in Lemma~\ref{lemma:without-Gronwall}
we estimate the distance between two solutions $x_\phi$ and $x_\psi$ given by
Lemma~\ref{lemma:global:aux1}; then we establish in
Lemma~\ref{lemma:global:equiv} the equivalence between~\eqref{eq:split-2b} with
$x = x_\phi$ and a different equation; after that, another application of the
Banach fixed point theorem (this time in space $\cX$) gives us a unique
solution of the new equation and the theorem follows easily.

For $s \ge 0$ and $C > D$, we denote by $\cB=\cB_s$ the space of $C^1$
functions
   $$ x \colon [s,+\infty[ \times E(s) \to X$$
that, for every $t \ge s$ and $\xi, \bar\xi \in E(s)$, verify the following
conditions
\begin{align}
   & x(t,\xi) \in E(t) \label{cond-x-in-E(t)}\\
   & x(s, \xi) = \xi, \ x(t,0) = 0 \label{cond-x-0}\\
   & \|\partial x(t,\xi)\|
      \le C \pfrac{\mu(t)}{\mu(s)}^a \mu(s)^\eps \label{cond-x-1}\\
   & \|\partial x(t,\xi)- \partial x(t,\bar\xi)\|
      \le C \pfrac{\mu(t)}{\mu(s)}^a \mu(s)^{2\eps} \|\xi - \bar \xi\|
      \label{cond-x-2}
\end{align}

The mean value theorem and~\eqref{cond-x-1} imply that
\begin{equation} \label{cond-x-3}
   \|x(t,\xi) - x(t,\bar\xi)\|
   \le C \pfrac{\mu(t)}{\mu(s)}^a \mu(s)^\eps \|\xi - \bar\xi\|
\end{equation}
for every $t \ge s$ and $\xi,\bar\xi \in E(s)$ and when $\bar\xi= 0$ we have
the following estimate
\begin{equation} \label{cond-x-4}
   \|x(t,\xi)\| \le C \pfrac{\mu(t)}{\mu(s)}^a \mu(s)^\eps \|\xi\|
\end{equation}
for every $t \ge s$ and $\xi \in E(s)$. In space $\cB_s$ we consider the
metric induced by
\begin{equation} \label{norm:B}
   \|x\|'
   = \sup\set{\dfrac{\mu(s)^a }{\mu(s)^\eps \mu(t)^a} \
   \dfrac{\|x(t,\xi)\|}{\|\xi\|} \colon t \ge s, \ \xi \in E(s)
   \setminus\set{0}}.
\end{equation}

\begin{proposition} \label{prop:B:complete}
   The space $\cB_s$ is a complete metric space with the metric induced
   by~\eqref{norm:B}.
\end{proposition}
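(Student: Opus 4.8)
The plan is to verify directly that $(\cB_s,d)$, with $d(x,\bar x)=\norm{x-\bar x}'$, is a complete metric space. First I would check that $d$ is well defined and is indeed a metric. Finiteness is the only delicate point: applying~\eqref{cond-x-4} to $x$ and to $\bar x$ gives $\norm{x(t,\xi)-\bar x(t,\xi)}\le 2C\pfrac{\mu(t)}{\mu(s)}^a\mu(s)^\eps\norm{\xi}$ for all $(t,\xi)$, hence $d(x,\bar x)\le 2C<+\infty$. Symmetry and the triangle inequality for $d$ follow from the seminorm properties of $\norm{\cdot}'$, and if $d(x,\bar x)=0$ then $x(t,\xi)=\bar x(t,\xi)$ whenever $\xi\ne0$, while $x(t,0)=\bar x(t,0)=0$ by~\eqref{cond-x-0}, so $x=\bar x$.

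Next, given a Cauchy sequence $(x_n)$ in $\cB_s$, I would produce a pointwise limit. For fixed $t\ge s$ and $\xi\in E(s)\setminus\set{0}$, definition~\eqref{norm:B} gives $\norm{x_n(t,\xi)-x_m(t,\xi)}\le\dfrac{\mu(s)^\eps\mu(t)^a}{\mu(s)^a}\,\norm{\xi}\,d(x_n,x_m)$, so $(x_n(t,\xi))_n$ is Cauchy in the Banach space $X$ and converges to some $x(t,\xi)$; set $x(t,0)=0$. A routine argument --- fix $\eps'>0$, take $N$ from the Cauchy condition, fix $(t,\xi)$ with $\xi\ne0$, let $m\to\infty$ in the inequality defining $\norm{x_n-x_m}'$, and then take the supremum over $(t,\xi)$ --- shows $x_n\to x$ with respect to $d$. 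It then remains to prove that $x\in\cB_s$.

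Several of the conditions defining $\cB_s$ pass to the limit at once: $x(t,\xi)\in E(t)$ because $E(t)=P(t)X$ is closed and $x_n(t,\xi)\in E(t)$ by~\eqref{cond-x-in-E(t)}; $x(s,\xi)=\xi$ and $x(t,0)=0$ by construction and~\eqref{cond-x-0}; and letting $n\to\infty$ in~\eqref{cond-x-3} --- using that~\eqref{norm:B} forces $x_n(t,\cdot)\to x(t,\cdot)$ uniformly on bounded subsets of $E(s)$ --- gives the Lipschitz estimate~\eqref{cond-x-3} for $x$. The hard part will be the $C^1$ regularity of $x$ together with the derivative bounds~\eqref{cond-x-1} and~\eqref{cond-x-2}, because the metric $d$ records no information about derivatives; this is precisely where~\eqref{cond-x-2} must be used in an essential way, and I expect it to be the main obstacle.

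To overcome it, I would first observe that, for each fixed $t$, condition~\eqref{cond-x-2} yields the second order estimate
$$\norm{x_n(t,\xi+h)-x_n(t,\xi)-\partial x_n(t,\xi)h}=\norm{\int_0^1\prts{\partial x_n(t,\xi+\tau h)-\partial x_n(t,\xi)}h\,d\tau}\le\dfrac{C}{2}\pfrac{\mu(t)}{\mu(s)}^a\mu(s)^{2\eps}\norm{h}^2$$
\emph{uniformly in $n$}, for all $\xi,h\in E(s)$. Writing $h=ue$ with $\norm{e}=1$, combining this estimate for $x_n$ and for $x_m$ with the bound on $\norm{(x_n-x_m)(t,\zeta)}$ coming from~\eqref{norm:B}, and then choosing $u$ small and $n,m$ large, one checks that $(\partial x_n(t,\xi))_n$ is Cauchy in $B(E(s),X)$, uniformly for $\xi$ in bounded subsets of $E(s)$; let $L(t,\xi)$ be its limit. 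Since $x_n(t,\cdot)\to x(t,\cdot)$ and $\partial x_n(t,\cdot)\to L(t,\cdot)$, both uniformly on bounded sets, the standard theorem on differentiating the limit of a sequence of $C^1$ maps applies and shows that $x(t,\cdot)$ is of class $C^1$ with $\partial x(t,\cdot)=L(t,\cdot)$. Finally, letting $n\to\infty$ in~\eqref{cond-x-1} and~\eqref{cond-x-2} carries these bounds to $\partial x$, so $x\in\cB_s$ and $\cB_s$ is complete. Everything except the convergence of $(\partial x_n)_n$ is bookkeeping.
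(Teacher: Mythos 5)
Your proof is correct, and it differs from the paper's in one substantive way: where you prove the key convergence-of-derivatives step by hand, the paper outsources it. The paper restricts each $x_n(t,\cdot)$ to balls $B_s(r)\subset E(s)$, observes that conditions \eqref{cond-x-4}, \eqref{cond-x-1}, \eqref{cond-x-2} place these restrictions in a fixed ball of the space $C^{1,1}_b(B_s(r),\cdot)$, and then invokes Elbialy's generalization \cite{Elbialy-PAMS-2000} of Henry's lemma \cite[p.~151]{Henry-LNM-1981}: a $C^{1,1}_b$-bounded sequence converging in the uniform $C^0$ norm has a $C^{1,1}_b$ limit and the derivatives converge pointwise. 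Your second-order Taylor estimate
$\norm{x_n(t,\xi+h)-x_n(t,\xi)-\partial x_n(t,\xi)h}\le\tfrac{C}{2}\pfrac{\mu(t)}{\mu(s)}^a\mu(s)^{2\eps}\norm{h}^2$,
combined with the $C^0$ control supplied by \eqref{norm:B} on bounded sets and the optimization in $\norm{h}=u$, is essentially a direct proof of that cited lemma in the case needed here; it even yields a slightly stronger conclusion (the derivatives are uniformly Cauchy on bounded subsets of $E(s)$, not merely pointwise convergent), after which the standard theorem on differentiating limits and the passage of \eqref{cond-x-1}--\eqref{cond-x-3} to the limit finish the argument exactly as in the paper. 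So your route is self-contained and avoids the external reference, at the cost of redoing the Henry--Elbialy argument; the paper's route is shorter but leans on that lemma as a black box. One cosmetic remark: like the paper, you only address regularity of $x(t,\cdot)$ for each fixed $t$, which is the level of rigor the paper itself adopts, so nothing further is required of you on that point.
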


\begin{proof}
   For $x \in \cB_s$, $t \ge s$ and  $r>0$, we define a function $x^{t,r}
   \colon B_s(r) \to F(t)$ by
      $$ x^{t,r}(\xi) = x(t,\xi),$$
   where $B_s(r)$ is the open ball of $E(s)$ centered at $0$ and with radius
   $r$. Let $(x_n)_{n \in \N}$ be a Cauchy sequence in $\cB_s$ with respect to
   the metric induced by \eqref{norm:B}. Then the sequence
   $\prts{x_n^{t,r}}_{n\in \N}$ is a Cauchy sequence with respect the supremum
   norm in the space of bounded functions from $B_s(r)$ into $F(t)$. Hence,
   there is a function $x^{t,r} : B_s(r) \to F(t)$ such that
   $\prts{x^{t,r}_n}_{n \in \N}$ converges to $x^{t,r}$ in the space of bounded
   functions from $B_s(r)$ into $F(t)$ equipped with the supremum norm.

   For each $\xi, \bar\xi \in B_s(r)$, by~\eqref{cond-x-4},
   \eqref{cond-x-1} and \eqref{cond-x-2}, and denoting the first derivative by
   $D$, we obtain
   \begin{align*}
      & \|x^{t,r}_n(\xi)\| \le C \pfrac{\mu(t)}{\mu(s)}^a \mu(s)^\eps r \\
      & \|\prts{D x^{t,r}_n} (\xi) \|\le C \pfrac{\mu(t)}{\mu(s)}^a
      \mu(s)^\eps,\\
      & \| \prts{D x^{t,r}_n}(\xi) - \prts{D x^{t,r}_n}(\bar\xi)\|
      \le C \pfrac{\mu(t)}{\mu(s)}^a \mu(s)^{2\eps} \|\xi - \bar\xi\|.
   \end{align*}
   Denote by $C^{1,1}_b(B_s(r),F(t))$ the space of $C^1$ functions $u$, defined
   from $B_s(r)$ into $F(t)$, having Lipschitz derivative and such that
   $\|u\|_{1,1} \le b$, where $\|\cdot\|_{1,1}$ is defined by
      $$ \|u\|_{1,1} =\max \{\|u\|_\infty, \|Du\|_\infty, L(Du)\},$$
   $\lVert \cdot \rVert_\infty$ is the supremum norm and
      $$ L(u)= \sup \set{\frac{\|u(\xi)-u(\bar\xi)\|}{\|\xi-\bar\xi\|}
         : \xi, \bar\xi \in B_s(r) \text{ with } \xi \ne \bar\xi}.$$
   With
      $$ b
         = C \pfrac{\mu(t)}{\mu(s)}^a \mu(s)^\eps \max\set{\mu(s)^\eps, r},$$
   it follows that $x^{t,r}_n \in C^{1,1}_b(B_s(r),F(t))$.

   From the generalization of Henry's Lemma (see~\cite[p.151]{Henry-LNM-1981})
   given by Elbialy~\cite{Elbialy-PAMS-2000} (for related results see
   also~\cite{Lanford-III-LNM-1973} and~\cite{Chow-Lu-PRSE-1988}) we conclude
   that $x^{t,r} \in C_b^{1,1}(B_s(r),F(t))$ and
   \begin{equation}\label{gb_eq:tontet}
      \prts{D x^{t,r}_n}_{n \in \N} \text{ converges pointwise to } D
      x^{t,r} \text{ when } n \to \infty
   \end{equation}
   for every $\xi \in B_s(r)$. The uniqueness of each function $x^{t,r}$ in
   the ball~$B_s(r)$ implies that we can obtain a function $x \colon
   [s,+\infty[ \times E(s) \to X $ such that $x(t,\xi) = x^{t,r}(\xi)$ for
   each~$r > 0$, $t \ge s$ and $\xi \in B_s(r)$. From~\eqref{gb_eq:tontet} we
   can easily see that $x \in \cB_s$. Furthermore, because $(x_n)_{n \in \N}$
   is a Cauchy sequence, for each $\kappa >0$ there is $p \in \N$ such that for
   $n,m > p$ we have
   \begin{equation}\label{gb_eq:tontet.1}
      \|x_n(t,\xi)-x_m(t,\xi)\|
      \le \kappa \pfrac{\mu(t)}{\mu(s)}^a \mu(s)^\eps \|\xi\|
   \end{equation}
   for every $t \ge s$ and $\xi \in E(s)$. Letting $m \to \infty$
   in~\eqref{gb_eq:tontet.1} we get
      $$ \|x_n(t,\xi)-x(t,\xi)\|
         \le \kappa \pfrac{\mu(t)}{\mu(s)}^a \mu(s)^\eps \|\xi\|,$$
   and therefore $(x_n)_{n \in \N}$ converges to $x$ in the space~$\cB_s$.
\end{proof}

We equip the space $\cX$ with the metric induced by
\begin{equation}\label{norm:global:X}
   \|\phi\|'
   = \sup\set{\dfrac{\|\phi(s,\xi)\|}{\|\xi\|} \colon (s, \xi) \in G}.
\end{equation}

\begin{proposition} \label{prop:X:complete}
   The space $\cX$ is a complete metric space with the metric induced
   by~\eqref{norm:global:X}.
\end{proposition}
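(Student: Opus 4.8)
The plan is to follow the proof of Proposition~\ref{prop:B:complete}, the present situation being in fact simpler because the metric~\eqref{norm:global:X} carries no weights in $\mu$ and no time variable. First I would note that $\|\cdot\|'$ does define a finite metric on $\cX$, since $\|\phi(s,\xi)-\psi(s,\xi)\| \le 2\|\xi\|$ for $\phi,\psi\in\cX$. Then, given a Cauchy sequence $(\phi_n)_{n\in\N}$ in $\cX$, I would fix $s\ge 0$ and $r>0$, let $B_s(r)$ be the open ball of $E(s)$ centered at $0$ with radius $r$, and set $\phi_n^{s,r}\colon B_s(r)\to F(s)$, $\phi_n^{s,r}(\xi)=\phi_n(s,\xi)$. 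From $\|\phi_n(s,\xi)-\phi_m(s,\xi)\| \le \|\phi_n-\phi_m\|'\,\|\xi\| \le r\,\|\phi_n-\phi_m\|'$ the sequence $\prts{\phi_n^{s,r}}_{n\in\N}$ is Cauchy for the supremum norm in the space of bounded functions from $B_s(r)$ into $F(s)$, hence converges uniformly on $B_s(r)$ to some $\phi^{s,r}\colon B_s(r)\to F(s)$ (here $F(s)=Q(s)X$ is closed).

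Next I would use~\eqref{cond-phi-1},~\eqref{cond-phi-2} and the estimate $\|\phi_n(s,\xi)\| \le \|\xi\| \le r$ (from~\eqref{cond-phi-3} with $\bar\xi=0$) to see that each $\phi_n^{s,r}$ lies in $C^{1,1}_b(B_s(r),F(s))$ with $b=\max\set{r,1}$, in the notation of the proof of Proposition~\ref{prop:B:complete}. The generalization of Henry's Lemma due to Elbialy~\cite{Elbialy-PAMS-2000} then yields $\phi^{s,r}\in C^{1,1}_b(B_s(r),F(s))$ together with the pointwise convergence of $\prts{D\phi_n^{s,r}}_{n\in\N}$ to $D\phi^{s,r}$ on $B_s(r)$. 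Since the functions $\phi^{s,r}$ for distinct radii agree on their common domains, they patch into a map $\phi\colon G\to X$ with $\phi(s,\xi)=\phi^{s,r}(\xi)$ whenever $\xi\in B_s(r)$.

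It then remains to verify that $\phi\in\cX$ and that $\phi_n\to\phi$. Condition~\eqref{cond-phi-in-F(s)} holds by construction; $\phi(s,0)=\lim_n\phi_n(s,0)=0$ and $\partial\phi(s,0)=\lim_n\partial\phi_n(s,0)=0$ follow from the pointwise convergence of the derivatives, giving~\eqref{cond-phi-0}; and~\eqref{cond-phi-1} and~\eqref{cond-phi-2} pass to the limit from the corresponding bounds for the $\phi_n$, again by pointwise convergence of $\partial\phi_n$. For the convergence, given $\kappa>0$ I would pick $p\in\N$ with $\|\phi_n(s,\xi)-\phi_m(s,\xi)\|\le\kappa\|\xi\|$ for all $(s,\xi)\in G$ and all $n,m>p$, and then let $m\to\infty$ to obtain $\|\phi_n(s,\xi)-\phi(s,\xi)\|\le\kappa\|\xi\|$ for all $(s,\xi)\in G$ and $n>p$, i.e. $\|\phi_n-\phi\|'\le\kappa$.

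The only step that is not a routine limiting argument is the passage from the uniform convergence of the $\phi_n(s,\cdot)$ to the $C^1$ regularity of the limit, keeping the bound~\eqref{cond-phi-1} on its derivative and the Lipschitz bound~\eqref{cond-phi-2} for its derivative; this is precisely what the Elbialy version of Henry's Lemma provides, exactly as in Proposition~\ref{prop:B:complete}. I do not expect any further difficulty.
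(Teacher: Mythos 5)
Your argument is correct and is exactly the one the paper intends: the paper omits this proof, stating it is obtained as in Proposition~\ref{prop:B:complete}, and your adaptation (restriction to balls $B_s(r)$, uniform Cauchy estimate from~\eqref{norm:global:X}, Elbialy's version of Henry's Lemma to get a $C^{1,1}$ limit with pointwise convergence of derivatives, patching, and passing the bounds~\eqref{cond-phi-0}--\eqref{cond-phi-2} and the metric convergence to the limit) is precisely that argument.
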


The proof of Proposition~\ref{prop:X:complete} is similar to the proof of
Proposition~\ref{prop:B:complete} and therefore is omitted.

Define
   $$ \phi_x(r,\xi) = \phi(r,x(r,\xi)) \quad \text{ and } \quad
      f_{x,\phi}(r,\xi) = f(t,x(r,\xi),\phi_x(r,\xi))$$
for each $x \in \cB_s$ and $\phi \in \cX$.

\begin{lemma} \label{lemma:global:aux1}
   Let $s \ge 0$ and $\phi \in \cX$. For $\delta > 0$ sufficiently
   small, there is one and only one $x = x_\phi \in \cB_s$ such that
   \begin{equation} \label{eq:x=U+int}
       x(t,\xi)
       = U(t,s) \xi + \int_s^t U(t,r) f_{x,\phi}(r,\xi) \dr
   \end{equation}
   for every $t \ge s$ and $\xi \in E(s)$.
\end{lemma}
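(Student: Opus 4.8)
The plan is to realize $x_\phi$ as the unique fixed point of the operator $\mathcal J$ defined on $\cB_s$ by
\[
   (\mathcal J x)(t,\xi) = U(t,s)\xi + \int_s^t U(t,r)\, f_{x,\phi}(r,\xi)\dr ,
\]
and then to invoke the Banach fixed point theorem, using the completeness of $\cB_s$ from Proposition~\ref{prop:B:complete}. First I would check that $\mathcal J$ is well defined and that $\mathcal J x$ is of class $C^1$: the integrand is continuous, $U(t,r)=P(t)T(t,r)P(r)$ has range in $E(t)$ so $(\mathcal J x)(t,\xi)\in E(t)$, and $(\mathcal J x)(s,\xi)=\xi$, $(\mathcal J x)(t,0)=0$ — the latter because $f(r,0,0)=0$ together with $x(r,0)=0$ and $\phi(r,0)=0$ force $f_{x,\phi}(r,0)=0$; thus conditions~\eqref{cond-x-in-E(t)} and~\eqref{cond-x-0} hold for $\mathcal J x$. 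The single analytic fact used repeatedly, replacing the Gronwall-type bookkeeping of the exponential theory, is
\[
   \int_s^t \mu'(r)\,\mu(r)^{-1-2\eps}\dr
   = \frac{\mu(s)^{-2\eps}-\mu(t)^{-2\eps}}{2\eps}\le \frac{\mu(s)^{-2\eps}}{2\eps}.
\]

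Next I would verify $\mathcal J(\cB_s)\subseteq\cB_s$. For~\eqref{cond-x-1} one differentiates under the integral sign — legitimate since the $\xi$-derivative of the integrand is dominated, uniformly in $\xi$, by an $r$-integrable function — obtaining by the chain rule $\partial(\mathcal J x)(t,\xi)=U(t,s)+\int_s^t U(t,r)\,\partial f\big(r,x(r,\xi),\phi_x(r,\xi)\big)\,(\Id+\partial\phi(r,x(r,\xi)))\,\partial x(r,\xi)\dr$; bounding $\|U(t,r)\|$ by~\eqref{eq:dich-1}, $\|\partial f\|$ by~\eqref{cond-f-1}, $\|\partial\phi\|\le1$ by~\eqref{cond-phi-1}, $\|\partial x\|$ by~\eqref{cond-x-1}, and then using the displayed identity, yields $\|\partial(\mathcal J x)(t,\xi)\|\le\big(D+\tfrac{CD\delta}{\eps}\big)\pfrac{\mu(t)}{\mu(s)}^a\mu(s)^\eps$, which is $\le C\pfrac{\mu(t)}{\mu(s)}^a\mu(s)^\eps$ once $\delta$ is small, possible because $C>D$. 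Condition~\eqref{cond-x-2} for $\mathcal J x$ is the most laborious point and, I expect, the main obstacle: one subtracts the integrands at $\xi$ and $\bar\xi$ and splits each difference into a term in which $\partial f$ is evaluated at two different points — controlled by~\eqref{cond-f-2} together with~\eqref{cond-phi-3} and~\eqref{cond-x-3} (and the norm $\|(u,v)\|=\|u\|+\|v\|$ on $E(r)\oplus F(r)$) to estimate the distance between those points — and a term in which the factor $(\Id+\partial\phi)\,\partial x$ is differenced, controlled by~\eqref{cond-phi-1}, \eqref{cond-phi-2}, \eqref{cond-x-1}, \eqref{cond-x-2} and~\eqref{cond-x-3}; the several powers of $\mu(r)/\mu(s)$ produced this way are then all absorbed into a single $\pfrac{\mu(r)}{\mu(s)}^a$ using $a<0$ and $\mu(r)\ge\mu(s)$ for $r\ge s$, so that the integrand is bounded by a constant (depending only on $C$) times $\delta\,\mu'(r)\mu(r)^{-1-2\eps}\pfrac{\mu(r)}{\mu(s)}^a\mu(s)^{2\eps}\|\xi-\bar\xi\|$, and the displayed identity closes the estimate with a constant that is $\le C\pfrac{\mu(t)}{\mu(s)}^a\mu(s)^{2\eps}$ for $\delta$ small.

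Finally I would show that $\mathcal J$ is a contraction for the metric induced by~\eqref{norm:B}. For $x,\bar x\in\cB_s$ one has $(\mathcal J x-\mathcal J\bar x)(t,\xi)=\int_s^t U(t,r)\big(f_{x,\phi}(r,\xi)-f_{\bar x,\phi}(r,\xi)\big)\dr$, and~\eqref{cond-f-3}, the norm on $E(r)\oplus F(r)$ and~\eqref{cond-phi-3} give $\|f_{x,\phi}(r,\xi)-f_{\bar x,\phi}(r,\xi)\|\le 2\delta\mu'(r)\mu(r)^{-1-3\eps}\|x(r,\xi)-\bar x(r,\xi)\|$, while by the definition of~\eqref{norm:B} the last factor is $\le\|x-\bar x\|'\pfrac{\mu(r)}{\mu(s)}^a\mu(s)^\eps\|\xi\|$. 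Estimating $\|U(t,r)\|$ with~\eqref{eq:dich-1} and integrating with the displayed identity leads to $\|\mathcal J x-\mathcal J\bar x\|'\le\tfrac{D\delta}{\eps}\|x-\bar x\|'$, so $\mathcal J$ is a contraction as soon as $\delta<\eps/D$. Choosing $\delta$ below the finitely many thresholds collected above, Proposition~\ref{prop:B:complete} and the Banach fixed point theorem provide a unique $x=x_\phi\in\cB_s$ with $\mathcal J x=x$, that is,~\eqref{eq:x=U+int}, completing the proof.
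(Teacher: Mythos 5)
Your proposal is correct and follows essentially the same route as the paper: the same operator $J_\phi$ on $\cB_s$, the same verification of conditions \eqref{cond-x-in-E(t)}--\eqref{cond-x-2} via the chain rule and the integral bound $\int_s^t \mu'(r)\mu(r)^{-2\eps-1}\dr\le\mu(s)^{-2\eps}/(2\eps)$, the same absorption of $\pfrac{\mu(r)}{\mu(s)}^{2a}$ into $\pfrac{\mu(r)}{\mu(s)}^{a}$ using $a<0$, and the same contraction estimate $\|Jx-Jy\|'\le\frac{D\delta}{\eps}\|x-y\|'$ before applying the Banach fixed point theorem in the complete space $\cB_s$. Only a cosmetic slip: in your Lipschitz-derivative step the full integrand, after inserting $\|U(t,r)\|$, carries the factor $\pfrac{\mu(t)}{\mu(s)}^{a}$ rather than $\pfrac{\mu(r)}{\mu(s)}^{a}$, which is exactly what makes the final bound close as you claim.
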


\begin{proof}
   Given $\phi \in \cX$, consider in $\cB_s$ the operator $J=J_\phi$ given,
   for every $x \in \cB_s$, by
      $$ (J x)(t,\xi)
         = U(t,s) \xi + \int_s^t U(t,r) f_{x,\phi}(r,\xi) \dr$$
   for each $(t,\xi) \in [s,+\infty[ \times E(s)$. First we will prove that $Jx
   \in \cB_s$ for every $x \in \cB_s$.

   The definition of $J$ immediately assures that $(Jx)(s,\xi) = U(s,s)
   \xi = \xi$ for every $\xi \in E(s)$ and that $(Jx)(t, \xi) \in E(t)$ for
   every $t \ge  s$ and every $\xi \in E(s)$. Furthermore,
   from~\eqref{cond-x-0},~\eqref{cond-phi-0} and~\eqref{cond-f-0} we obtain
   $(Jx)(t,0) = 0$ for every $t \ge s$.

   Moreover, the operator $J$ is of class $C^1$ and
      $$ \partial (Jx) (t,\xi)
         = U(t,s) + \int_s^t U(t,r) \ \partial f_{x,\phi}(r,\xi) \dr$$
   and this implies that
   \begin{equation}\label{eq:norm:der:Jx}
      \|\partial(Jx) (t,\xi)\|
      \le \|U(t,s)\| + \int_s^t \|U(t,r)\| \|\partial f_{x,\phi}(r, \xi)\| \dr.
      \end{equation}
   From the chain rule and~\eqref{cond-phi-1} it follows that
   \begin{align*}
      \| \partial f_{x,\phi}(r,\xi)\|
      & \le \|\partial f(r, x(r,\xi), \phi(r, x(r, \xi)))\|
         \prtsr{\|\partial x(r,\xi)\| + \|\partial \phi(r,x(r,\xi))\|
         \|\partial x(r,\xi)\|}\\
      & \le 2 \|\partial f(r, x(r,\xi), \phi(r, x(r, \xi)))\| \
         \|\partial x(r,\xi)\|
   \end{align*}
   for every $r \ge s$ and every $\xi \in E(s)$. By~\eqref{cond-f-1}
   and~\eqref{cond-x-1} we obtain
   \begin{equation} \label{eq:norm:der:f_phi,x}
      \| \partial f_{x,\phi}(r,\xi)\|
         \le 2 C \delta \mu'(r)\mu(r)^{-3 \eps - 1}
         \pfrac{\mu(r)}{\mu(s)}^a \mu(s)^\eps.
   \end{equation}
   for every $r \ge s$ and every $\xi \in E(s)$. Using~\eqref{eq:dich-1}
   and~\eqref{eq:norm:der:f_phi,x} we get
   \begin{align*}
      \int_s^t \|U(t,r)\| \|\partial f_{x,\phi}(r, \xi)\| \dr
      & \le 2CD\delta \pfrac{\mu(t)}{\mu(s)}^a \mu(s)^\eps
         \int_s^t \mu'(r) \mu(r)^{-2\eps -1 }\dr\\
      & \le \dfrac{CD \delta}{\eps}
         \pfrac{\mu(t)}{\mu(s)}^a \mu(s)^{-\eps}.
   \end{align*}
   Using this inequality,~\eqref{eq:dich-1} and~\eqref{eq:norm:der:Jx} we have
      $$ \| \partial(Jx) (t,\xi)\|
         \le \prts{D + \dfrac{CD \delta}{\eps}}
            \pfrac{\mu(t)}{\mu(s)}^a \mu(s)^\eps$$
   for every $t \ge s$ and every $\xi \in E(s)$. Choosing $\delta \le \eps
   \prts{\dfrac{1}{D}-\dfrac{1}{C}}$ we obtain for every $t \ge s$ and every
   $\xi \in E(s)$
      $$ \| \partial(Jx) (t,\xi)\|
         \le C \pfrac{\mu(t)}{\mu(s)}^a \mu(s)^\eps.$$

   For $r \ge s$ and $\xi \in E(s)$, using again the chain
   rule,~\eqref{cond-f-1},~\eqref{cond-f-2},~\eqref{cond-phi-1},~\eqref{cond-phi-2}
   and~\eqref{cond-phi-3}, we have
   \begin{align*}
      & \norm{\partial f_{x,\phi}(r, \xi) - \partial f_{x,\phi}(r, \bar\xi)}\\
      & \le \norm{\partial f(r, x(r,\xi), \phi_x(r, \xi))
         - \partial f(r, x(r,\bar\xi), \phi_x(r,\bar\xi))} \times\\
      & \phantom{ \le } \times
         \prts{\norm{\partial x(r,\xi)}
         + \norm{\partial \phi(r,x(r,\xi))} \norm{\partial x(r,\xi)}}\\
      & \phantom{ \le }
         + \norm{\partial f(r, x(r,\bar\xi), \phi_x(r,\bar\xi))}
         \norm{\partial x(r,\xi) - \partial x(r,\bar\xi)}\\
      & \phantom{ \le } +
         \norm{\partial f(r, x(r,\bar\xi), \phi_x(r,\bar\xi))}
         \norm{\partial \phi(r,x(r,\xi))} \norm{\partial x(r,\xi)
            - \partial x(r,\bar\xi)}\\
      & \phantom{ \le } +
         \norm{\partial f(r, x(r,\bar\xi), \phi_x(r,\bar\xi))}
         \norm{\partial \phi(r,x(r,\xi)) - \partial \phi(r,x(r,\bar\xi))}
         \norm{\partial x(r,\bar\xi)}\\
      & \le 2 \delta \mu'(r) \ \mu(r)^{-3\eps-1} \prts{\norm{x(r,\xi) -
      x(r,\bar\xi)}
         + \norm{ \phi_x(r, \xi) -  \phi_x(r,\bar\xi)}}
         \norm{\partial x(r,\xi)}\\
      & \phantom{ \le }
         + 2 \delta \mu'(r) \ \mu(r)^{-3\eps-1}
         \norm{\partial x(r,\xi) - \partial x(r,\bar\xi)}\\
      & \phantom{ \le } +
         \delta \mu'(r) \ \mu(r)^{-3\eps-1}
         \norm{x(r,\xi) - x(r,\bar\xi)} \norm{\partial x(r,\bar\xi)}\\
      & \le \delta \mu'(r) \ \mu(r)^{-3\eps-1}
         \prts{2 \norm{\partial x(r,\xi) - \partial x(r,\bar\xi)}
         + 5 \norm{x(r,\xi) - x(r,\bar\xi)} \norm{\partial x(r,\bar\xi)}}
   \end{align*}
   and by~\eqref{cond-x-2},~\eqref{cond-x-3} and~\eqref{cond-x-1} we obtain
   \begin{equation} \label{eq:norm:der:f-f}
      \norm{\partial f_{x,\phi}(r, \xi) - \partial f_{x,\phi}(r, \bar\xi)}
      \le 7 C^2 \delta \mu'(r) \ \mu(r)^{-3\eps-1}
         \pfrac{\mu(r)}{\mu(s)}^a \mu(s)^{2\eps} \|\xi-\bar\xi\|.
   \end{equation}
   Therefore, for every $t \ge s$ and every $\xi \in E(s)$, we obtain
   \begin{align*}
      \|\partial(J x)(t,\xi) - \partial(Jx)(t,\bar\xi)\|
      & \le \int_s^t \|U(t,r)\|
         \|\partial f_{x,\phi}(r, \xi) - \partial f_{x,\phi}(r,\bar\xi)\| \dr
         \\
      & \le 7C^2 D \delta \pfrac{\mu(t)}{\mu(s)}^a \mu(s)^{2 \eps}
         \|\xi-\bar\xi\| \int_s^t  \mu'(r) \ \mu(r)^{-2\eps - 1} \dr\\
      & \le \dfrac{7 C^2 D \delta}{2 \eps}  \pfrac{\mu(t)}{\mu(s)}^a
         \|\xi-\bar\xi\|
   \end{align*}
   and for $\delta < \dfrac{2 \eps}{7 C D}$ we have
      $$ \|\partial(J x)(t,\xi) - \partial(Jx)(t,\bar\xi)\|
         \le C  \pfrac{\mu(t)}{\mu(s)}^a \mu(s)^{2\eps}
         \|\xi-\bar\xi\|.$$

   We have proved $Jx$
   verifies~\eqref{cond-x-in-E(t)},~\eqref{cond-x-0},~\eqref{cond-x-1}
   and~\eqref{cond-x-2} for every $x \in \cB_s$. Therefore $J$ is an operator
   from $\cB_s$ into $\cB_s$.

   Now we will prove that, choosing $\delta$ sufficiently small, $J$ is
   a contraction in $\cB_s$. From~\eqref{cond-f-3} and~\eqref{cond-phi-3} we
   have for $r \ge s$ and $\xi \in E(s)$
   \begin{align*}
      \|f_{x,\phi}(r,\xi) - f_{y,\phi}(r,\xi)\|
      & \le \delta \mu'(r) \mu(r)^{-3 \eps -1}
         \| \prts{x(r,\xi),\phi_x(r,\xi)} - \prts{y(r,\xi),\phi_y(r,\xi)} \|\\
      & \le 2 \delta \mu'(r) \mu(r)^{-3 \eps -1} \| x(r,\xi) - y(r,\xi)\| \\
      & \le 2 \delta \mu'(r) \mu(r)^{-3 \eps -1}
         \pfrac{\mu(r)}{\mu(s)}^a \mu(s)^\eps  \|\xi\| \| x - y\|'.
   \end{align*}
   Using this estimate, we obtain by~\eqref{eq:dich-1}, for every $t \ge s$ and
   every $\xi \in E(s)$,
   \begin{align*}
      \|(Jx)(t,\xi) - (Jy)(t,\xi)\|
      & \le \int_s^t \|U(t,r)\| \|f_{x,\phi}(r,\xi)-f_{y,\phi}(r,\xi)\| \dr\\
      & \le 2 D \delta \pfrac{\mu(t)}{\mu(s)}^a \mu(s)^\eps \|\xi\|
         \| x - y\|' \int_s^t \mu'(r) \mu(r)^{-2 \eps -1} \dr\\
      & \le \dfrac{D \delta}{\eps}
         \pfrac{\mu(t)}{\mu(s)}^a \mu(s)^\eps \|\xi\| \| x - y\|'
   \end{align*}
   and thus
      $$ \|Jx - Jy\|' \le \dfrac{D \delta}{\eps} \| x - y\|' $$
   for every $x,y \in \cB_s$. Therefore, choosing $\delta < \eps/D$, we
   conclude that $J$ is a contraction. Because $\cX$ is a complete metric
   space, $J$ has a unique fixed point $x_\phi \in \cB_s$ and this fixed point
   verifies~\eqref{eq:x=U+int}. This concludes the proof.
\end{proof}

Given $\phi \in \cX$ we denote by $x_\phi$ the unique function in $\cB_s$ that
verifies~\eqref{eq:x=U+int}. In the next Lemma we obtain an estimate that, in
the exponential case, is usually obtained using Gronwall's lemma. Here we use
an induction argument that allows us to obtain a corresponding estimate in our
generalized context.

\begin{lemma} \label{lemma:without-Gronwall}
   Choosing $\delta >0$ sufficiently small, we have
   \begin{equation} \label{eq:norm_x_phi-x_psi}
      \|x_\phi(t,\xi) - x_\psi(t,\xi)\|
      \le C \pfrac{\mu(t)}{\mu(s)}^a \mu(s)^{-\eps} \|\xi\| \cdot \|\phi -
      \psi\|'
   \end{equation}
   for every $\phi, \psi \in \cX$ and every $(t,\xi) \in [s,+\infty[ \times
   E(s)$.
\end{lemma}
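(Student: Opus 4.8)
The plan is to estimate the difference $x_\phi(t,\xi)-x_\psi(t,\xi)$ directly from the fixed-point equation~\eqref{eq:x=U+int}. Since both $x_\phi$ and $x_\psi$ satisfy
   $$ x_\phi(t,\xi) = U(t,s)\xi + \int_s^t U(t,r) f_{x_\phi,\phi}(r,\xi)\dr, \qquad
      x_\psi(t,\xi) = U(t,s)\xi + \int_s^t U(t,r) f_{x_\psi,\psi}(r,\xi)\dr,$$
the linear terms cancel and we are left with $x_\phi(t,\xi)-x_\psi(t,\xi) = \int_s^t U(t,r)\bigl(f_{x_\phi,\phi}(r,\xi)-f_{x_\psi,\psi}(r,\xi)\bigr)\dr$. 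First I would split the integrand as
   $$ f_{x_\phi,\phi}(r,\xi)-f_{x_\psi,\psi}(r,\xi)
      = \bigl(f_{x_\phi,\phi}(r,\xi)-f_{x_\psi,\phi}(r,\xi)\bigr)
        + \bigl(f_{x_\psi,\phi}(r,\xi)-f_{x_\psi,\psi}(r,\xi)\bigr),$$
bounding the first piece by $2\delta\mu'(r)\mu(r)^{-3\eps-1}\|x_\phi(r,\xi)-x_\psi(r,\xi)\|$ via~\eqref{cond-f-3} (the $\phi$-component contributes another factor at most $1$ by~\eqref{cond-phi-3}), and the second piece by $\delta\mu'(r)\mu(r)^{-3\eps-1}\|\phi_{x_\psi}(r,\xi)-\psi_{x_\psi}(r,\xi)\|$, which by definition of $\|\cdot\|'$ on $\cX$ and~\eqref{cond-x-4} is at most $\delta\mu'(r)\mu(r)^{-3\eps-1}C\pfrac{\mu(r)}{\mu(s)}^a\mu(s)^\eps\|\xi\|\,\|\phi-\psi\|'$.

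**The induction.**
Applying~\eqref{eq:dich-1} to $\|U(t,r)\|$ and collecting terms gives an integral inequality of the form
   $$ \|x_\phi(t,\xi)-x_\psi(t,\xi)\|
      \le c_1\delta \pfrac{\mu(t)}{\mu(s)}^a \mu(s)^{-\eps}\|\xi\|\,\|\phi-\psi\|'
       + c_2\delta \int_s^t \frac{\mu'(r)}{\mu(r)^{2\eps+1}}\,\frac{\mu(t)^a}{\mu(r)^a}\,\|x_\phi(r,\xi)-x_\psi(r,\xi)\|\dr,$$
where the first term comes from the $\phi-\psi$ piece after integrating $\int_s^t\mu'(r)\mu(r)^{-2\eps-1}\dr\le \mu(s)^{-2\eps}/(2\eps)$. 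In the exponential case one would close this with Gronwall; here, as the paper emphasizes, that is not available, so instead I would prove by induction on $n$ that
   $$ \|x_\phi(t,\xi)-x_\psi(t,\xi)\|
      \le c_1\delta\,\|\xi\|\,\|\phi-\psi\|' \pfrac{\mu(t)}{\mu(s)}^a \mu(s)^{-\eps}
         \sum_{k=0}^{n} \frac{(c_2\delta/(2\eps))^k}{k!},$$
at least up to an additional remainder term that tends to $0$. The inductive step substitutes the bound for stage $n$ back into the integral, using $\mu(s)^{-\eps}\le\mu(r)^{-\eps}$ on the region $r\ge s$ and evaluating $\int_s^t \mu'(r)\mu(r)^{-2\eps-1}\mu(r)^{-2\eps}\cdots\dr$-type integrals by the substitution $\rho=\mu(r)$, which turns each into an elementary power integral producing the next factorial factor. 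Letting $n\to\infty$, the series sums to $\exp(c_2\delta/(2\eps))$, so we get
   $$ \|x_\phi(t,\xi)-x_\psi(t,\xi)\|
      \le c_1\delta\, e^{c_2\delta/(2\eps)} \pfrac{\mu(t)}{\mu(s)}^a \mu(s)^{-\eps}\|\xi\|\,\|\phi-\psi\|'.$$
Choosing $\delta$ small enough that $c_1\delta\,e^{c_2\delta/(2\eps)}\le C$ yields~\eqref{eq:norm_x_phi-x_psi}.

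**The main obstacle.**
The routine part is the estimation of the integrand; the delicate part is setting up the induction so that it actually converges and that one can iterate inside the integral without the bound blowing up. The key technical device is that $\mu$ is increasing, so for $s\le r\le t$ we have $\mu(s)\le\mu(r)\le\mu(t)$, and the change of variables $\rho=\mu(r)$, $d\rho=\mu'(r)\dr$ reduces every integral encountered to $\int_{\mu(s)}^{\mu(t)}\rho^{-(2\eps+1)}\cdot(\text{power of }\rho)\,d\rho$, which is finite because $\eps>0$ guarantees the exponent at infinity is $<-1$. One must be careful that at each stage of the induction the power of $\mu(r)$ appearing under the integral is such that the integral still converges and contributes exactly a factor $\mu(s)^{-2\eps}/(2\eps)$ (after bounding $\mu(t)^{\text{something}}$ out front appropriately) so that the $\mu(s)^{-\eps}$ prefactor is preserved and not degraded across iterations; tracking these exponents correctly is where the real care is needed.
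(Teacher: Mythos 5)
Your argument is essentially sound, but it takes a genuinely different route from the paper's. The paper never writes an integral inequality for the fixed points themselves: it goes back to the Picard iterates $y_{n+1}=J_\phi y_n$, $z_{n+1}=J_\psi z_n$ with $y_1=z_1=U(\cdot,s)\xi$, proves by induction the single $n$-independent bound $\|y_n(t,\xi)-z_n(t,\xi)\|\le C\pfrac{\mu(t)}{\mu(s)}^a\mu(s)^{-\eps}\|\xi\|\,\|\phi-\psi\|'$ (the smallness of $\delta$ makes the constant $C$ reproduce itself exactly at each step, so no series ever appears), and then passes to the limit using $\|x_\phi-x_\psi\|'=\lim_n\|y_n-z_n\|'$. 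You instead subtract the two fixed-point equations \eqref{eq:x=U+int}, split the integrand exactly as the paper does (the estimates via \eqref{cond-f-3}, \eqref{cond-phi-3}, \eqref{cond-x-4}, \eqref{eq:dich-1} are all correct), and then iterate the resulting integral inequality; after dividing by $\mu(t)^a$ this is literally an inequality of Gronwall type with kernel $c_2\delta\,\mu'(r)\mu(r)^{-2\eps-1}$ whose total integral is at most $c_2\delta\mu(s)^{-2\eps}/(2\eps)$, so your iteration is in effect a proof of Gronwall's lemma in a case where the exponential factor is a constant close to $1$; choosing $\delta$ small then absorbs it into $C$, as you say. Two points need shoring up. First, for the iteration to close you must know a priori that the remainder tends to zero, which requires a bound on $\mu(r)^{-a}\|x_\phi(r,\xi)-x_\psi(r,\xi)\|$ uniform in $r$; this is available, since $x_\phi,x_\psi\in\cB_s$ and \eqref{cond-x-4} give $\|x_\phi(r,\xi)-x_\psi(r,\xi)\|\le 2C\pfrac{\mu(r)}{\mu(s)}^a\mu(s)^\eps\|\xi\|$, but you should state it — it is exactly what the paper's iterate-based induction avoids needing. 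Second, the factorial series you write down only comes out if you keep the accumulated integral $\int_s^r\mu'(u)\mu(u)^{-2\eps-1}\,du$ raised to the $k$-th power inside the nested integrals; if, as your inductive statement suggests, you replace it by the constant bound $1/(2\eps)$ at each stage, you obtain a geometric series with ratio $c_2\delta/(2\eps)$ instead. Either series converges for $\delta$ small and yields \eqref{eq:norm_x_phi-x_psi}, so the conclusion is unaffected, but the bookkeeping as written is not quite consistent. With these two repairs your proof is complete; its trade-off against the paper's is that it works directly with $x_\phi,x_\psi$ at the cost of an a priori bound and series bookkeeping, whereas the paper's induction over the iterates of the contractions from Lemma~\ref{lemma:global:aux1} keeps a single constant throughout and needs neither.
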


\begin{proof}
   Given $\phi, \psi \in \cX$, we write $y_{n+1} = J_\phi y_n$ and $z_{n+1} =
   J_\psi z_n$ with
      $$ y_1(t,\xi) = z_1(t,\xi) = U(t,s)\xi$$
   for every $t \ge s$ and every $\xi \in E(s)$. Since $x_\phi$ and $x_\psi$
   were obtained in Lemma~\ref{lemma:global:aux1} using Banach's fixed point
   theorem, it follows that
      $$ \|x_\phi - x_\psi\|'
         = \lim_{n \to +\infty} \| y_n - z_n\|'.$$
   Hence, to prove \eqref{eq:norm_x_phi-x_psi}, it is enough to prove that, for
   each $n \in \N$, we have
   \begin{equation} \label{eq:J^n_phi-J^n_psi}
      \| y_n(t,\xi) - z_n(t,\xi)\|
      \le C \pfrac{\mu(t)}{\mu(s)}^a \mu(s)^{-\eps} \|\xi\| \cdot \|\phi -
      \psi\|'
   \end{equation}
   for all $(t,\xi) \in [s,+\infty[ \times E(s)$. We are going to prove
   inequality~\eqref{eq:J^n_phi-J^n_psi} by mathematical induction on $n$.
   Obviously, inequality~\eqref{eq:J^n_phi-J^n_psi} holds for $n=1$. Suppose
   that \eqref{eq:J^n_phi-J^n_psi} holds for $n$. Then, for $r \ge s$ and $\xi
   \in E(s)$, we have by~\eqref{cond-f-2},~\eqref{cond-x-4} and the induction
   hypothesis
   \begin{align*}
      & \|f_{y_n,\phi}(r,\xi) - f_{z_n,\psi}(r,\xi)\|\\
      & \le \delta \mu'(r) \ \mu(r)^{-3\eps-1}
         \prts{\|y_n(r,\xi)-z_n(r,\xi)\|
            + \|\phi_{y_n}(r,\xi)- \psi_{z_n}(r,\xi)\|}\\
      & \le \delta \mu'(r) \ \mu(r)^{-3\eps-1}
         \prts{2 \|y_n(r,\xi)-z_n(r,\xi)\|
            + \|\phi_{z_n}(r,\xi)- \psi_{z_n}(r,\xi)\|}\\
      & \le \delta \mu'(r) \ \mu(r)^{-3\eps-1}
         \prts{2 \|y_n(r,\xi)-z_n(r,\xi)\|
            + \|\phi-\psi\|' \|z_n(r,\xi)\|}\\
      & \le 3 \delta C \mu'(r) \ \mu(r)^{-3\eps-1} \pfrac{\mu(r)}{\mu(s)}^a
      \mu(s)^\eps
         \|\xi\| \cdot \|\phi - \psi\|'
   \end{align*}
   and this implies that
   \begin{align*}
      & \| y_{n+1}(t,\xi) - z_{n+1}(t,\xi)\| \\
      & \le \int_s^t \|U(t,r)\|
         \|f_{y_n,\phi}(r,\xi) - f_{z_n,\psi}(r,\xi)\| \dr\\
      & \le 3 C D \delta  \pfrac{\mu(t)}{\mu(s)}^a \mu(s)^\eps
         \|\xi\| \cdot \|\phi - \psi\|'
         \int_s^t \mu'(r) \mu(r)^{-2\eps-1}\dr\\
      & \le \dfrac{3 C D \delta}{2 \eps} \pfrac{\mu(t)}{\mu(s)}^a
         \mu(s)^{-\eps} \|\xi\| \cdot \|\phi - \psi\|'
   \end{align*}
   for every $t \ge s$ and every $\xi \in E(s)$. Thus, choosing $\delta <
   \dfrac{2 \eps}{3 D}$, inequality~\eqref{eq:J^n_phi-J^n_psi} holds for
   $n+1$.
   Therefore \eqref{eq:J^n_phi-J^n_psi} is true for all $n \in \N$ and this
   finishes the proof of the lemma.
\end{proof}

\begin{lemma} \label{lemma:global:equiv}
   If $\delta > 0$ is sufficiently small, for every $\phi \in \cX$, the
   following properties are equivalent
   \begin{enumerate}[$a)$]
      \item for every $s\ge 0$, $t \ge s$ and $\xi \in E(s)$,
         \begin{equation} \label{eq:phi_x}
            \phi_{x_\phi} (t,\xi)
            = V(t,s) \phi(s,\xi)
               + \int_s^t V(t,r) f_{x_\phi, \phi} (r,\xi) \dr,
         \end{equation}
         where $x_\phi \in \cB_s$ is given by
         Lemma~\ref{lemma:global:aux1};
      \item for every $s \ge 0$ and every $\xi \in E(s)$,
         \begin{equation} \label{eq:phi}
            \phi(s,\xi)
            = -\int_s^{+\infty} V(r,s)^{-1} f_{x_\phi, \phi} (r,\xi) \dr,
         \end{equation}
         where $x_\phi \in \cB_s$ is given by
         Lemma~\ref{lemma:global:aux1}.
   \end{enumerate}
\end{lemma}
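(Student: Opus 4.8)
The plan is to pass between the integral identities~\eqref{eq:phi_x} and~\eqref{eq:phi} by algebraic manipulations resting on the cocycle identities for the operators $V$ together with the size estimates furnished by the nonuniform $\mu$-dichotomy, much as in the classical argument for exponential dichotomies. Two preliminary facts are needed. First, for every $s\ge 0$ and $\xi\in E(s)$ the improper integral in~\eqref{eq:phi} converges absolutely and the accompanying boundary term decays: since $V(r,s)^{-1}$ is the restriction of $T(r,s)^{-1}$ to $F(r)$ (because $Q(s)T(r,s)^{-1}=T(r,s)^{-1}Q(r)$), the bound $\norm{V(r,s)^{-1}\eta}\le D\pfrac{\mu(r)}{\mu(s)}^{-b}\mu(r)^\eps\norm{\eta}$ holds for $\eta\in F(r)$ by~\eqref{eq:dich-2}; combining it with~\eqref{cond-f-4},~\eqref{cond-phi-3} (at $\bar\xi=0$) and~\eqref{cond-x-4} one reaches an estimate of the form
\begin{equation*}
   \norm{V(r,s)^{-1}f_{x_\phi,\phi}(r,\xi)}\le 2CD\delta\,\mu(s)^{b-a+\eps}\norm{\xi}\;\mu'(r)\,\mu(r)^{a-b-2\eps-1},
\end{equation*}
and~\eqref{eq:a+eps<b} gives $a-b-2\eps<0$, so the substitution $u=\mu(r)$ shows that $\dint_s^{+\infty}\mu'(r)\mu(r)^{a-b-2\eps-1}\dr$ is finite; likewise, since $\norm{\phi_{x_\phi}(t,\xi)}=\norm{\phi(t,x_\phi(t,\xi))}\le\norm{x_\phi(t,\xi)}$, one has $\norm{V(t,s)^{-1}\phi_{x_\phi}(t,\xi)}\le CD\,\mu(s)^{b-a+\eps}\norm{\xi}\,\mu(t)^{a-b+\eps}\to 0$ as $t\to+\infty$, because $a-b+\eps<0$ and $\mu(t)\to+\infty$. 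Second, I would record a cocycle property of $x_\phi$: if $t_0\ge s$ and $\eta=x_\phi(t_0,\xi)\in E(t_0)$, then the curve $r\mapsto x_\phi(r,\xi)$ on $[t_0,+\infty[$ agrees with $r\mapsto\tilde{x}_\phi(r,\eta)$, where $\tilde{x}_\phi\in\cB_{t_0}$ is the function produced by Lemma~\ref{lemma:global:aux1} relative to the base point $t_0$; using $U(r,s)=U(r,t_0)U(t_0,s)$ one checks that both curves satisfy~\eqref{eq:x=U+int} with $s$ and $\xi$ replaced by $t_0$ and $\eta$, and the uniqueness of such a curve follows from the Lipschitz bounds~\eqref{cond-f-3} and~\eqref{cond-phi-3} via Gronwall's lemma on each compact subinterval of $[t_0,+\infty[$. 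In particular $f_{\tilde{x}_\phi,\phi}(r,\eta)=f_{x_\phi,\phi}(r,\xi)$ for $r\ge t_0$.

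For $(a)\Rightarrow(b)$ I would fix $s\ge 0$, $\xi\in E(s)$ and $t\ge s$, apply $V(t,s)^{-1}$ to~\eqref{eq:phi_x}, and use the cocycle identity $V(t,s)^{-1}V(t,r)=V(r,s)^{-1}$ (valid for $s\le r\le t$) to rewrite it as
\begin{equation*}
   \phi(s,\xi)=V(t,s)^{-1}\phi_{x_\phi}(t,\xi)-\dint_s^t V(r,s)^{-1}f_{x_\phi,\phi}(r,\xi)\dr .
\end{equation*}
Letting $t\to+\infty$ and invoking the first preliminary fact (convergence of the integral, decay of the boundary term) yields~\eqref{eq:phi}.

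For $(b)\Rightarrow(a)$ I would again fix $s\ge 0$, $\xi\in E(s)$ and $t\ge s$, set $\eta=x_\phi(t,\xi)\in E(t)$, and apply~\eqref{eq:phi} at the base point $t$ with initial value $\eta$; using the cocycle property of $x_\phi$ this reads
\begin{equation*}
   \phi_{x_\phi}(t,\xi)=\phi(t,\eta)=-\dint_t^{+\infty}V(r,t)^{-1}f_{x_\phi,\phi}(r,\xi)\dr .
\end{equation*}
On the other hand, applying~\eqref{eq:phi} at the base point $s$ and using the cocycle identities $V(t,s)V(r,s)^{-1}=V(t,r)$ for $s\le r\le t$ and $V(t,s)V(r,s)^{-1}=V(r,t)^{-1}$ for $r\ge t$, I get
\begin{equation*}
   V(t,s)\phi(s,\xi)=-\dint_s^t V(t,r)f_{x_\phi,\phi}(r,\xi)\dr-\dint_t^{+\infty}V(r,t)^{-1}f_{x_\phi,\phi}(r,\xi)\dr .
\end{equation*}
Adding $\dint_s^t V(t,r)f_{x_\phi,\phi}(r,\xi)\dr$ to both sides and comparing with the previous display gives precisely~\eqref{eq:phi_x}.

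I expect the only genuinely delicate step to be the cocycle property of $x_\phi$, i.e.\ the curve-level uniqueness of solutions of~\eqref{eq:x=U+int}, which I would establish by a Gronwall argument confined to compact time intervals (using~\eqref{cond-f-3} and~\eqref{cond-phi-3}); this is harmless precisely because it is applied on bounded intervals, unlike the situation treated in Lemma~\ref{lemma:without-Gronwall}. Everything else is bookkeeping with the two integral equations, the decisive quantitative inputs being the dichotomy estimate~\eqref{eq:dich-2} and the hypothesis~\eqref{eq:a+eps<b}, which is exactly what guarantees the convergence of the improper integral in~\eqref{eq:phi} and the decay $V(t,s)^{-1}\phi_{x_\phi}(t,\xi)\to 0$ as $t\to+\infty$; the smallness of $\delta$ is used only through the invocation of Lemma~\ref{lemma:global:aux1}.
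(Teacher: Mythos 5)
Your proposal is correct and follows essentially the same route as the paper: you establish convergence of the improper integral and the decay of the boundary term $V(t,s)^{-1}\phi_{x_\phi}(t,\xi)$ from \eqref{eq:dich-2}, \eqref{cond-f-4}, \eqref{cond-x-4} and \eqref{eq:a+eps<b}, prove $a)\Rightarrow b)$ by applying $V(t,s)^{-1}$ and letting $t\to+\infty$, and prove $b)\Rightarrow a)$ by evaluating \eqref{eq:phi} at $(t,x_\phi(t,\xi))$ and splitting the integral with the cocycle identities for $V$, exactly as in the paper. The only difference is that you make explicit (via a Gronwall argument on compact intervals) the flow property $x_\phi(r,\xi)=\tilde x_\phi(r,x_\phi(t,\xi))$, which the paper simply asserts through the semiflow $F_r(s,\xi)=(s+r,x_\phi(s+r,\xi))$; this is a harmless and indeed welcome addition rather than a different approach.
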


\begin{proof}
   We start by showing that the integral in~\eqref{eq:phi} is well defined. For
   every $r \ge s$, from~\eqref{cond-f-4} and using the fact that $x_\phi \in
   \cB_s$, we obtain
   \begin{align*}
      \|f_{x_\phi,\phi}(r,\xi)\|
      & \le 2 \delta \mu'(r) \ \mu(r)^{-3\eps-1} \|x_\phi(r,\xi)\|\\
      & \le 2 C \delta \mu'(r) \ \mu(r)^{-3\eps-1} \pfrac{\mu(r)}{\mu(s)}^a
            \mu(s)^\eps \|\xi\|
   \end{align*}
   and by~\eqref{eq:dich-2} this implies that
   \begin{align*}
      & \int_s^{+\infty} \|V(r,s)^{-1}\| \|f_{x_\phi, \phi} (r,\xi)\| \dr\\
      & \le 2CD\delta \mu(s)^{-a + b + \eps} \|\xi\|
         \int_s^{+\infty} \mu'(r) \mu(r)^{a-b-2\eps-1} \dr\\
      & \le \dfrac{2CD \delta}{\abs{a -b - 2\eps}} \mu(s)^{-\eps} \|\xi\|.
   \end{align*}
   Therefore the integral in~\eqref{eq:phi} is well defined.

   Now let us assume that $a)$ is verified. Then, from~\eqref{eq:phi_x} we get
   \begin{equation} \label{eq:phi-aux1}
      \phi(s,\xi)
      = V(t,s)^{-1} \phi_{x_\phi} (t,\xi)- \int_s^t V(r,s)^{-1}
         f_{x_\phi, \phi} (r,\xi) \dr.
   \end{equation}
   Since
   \begin{align*}
      \|V(t,s)^{-1} \phi_{x_\phi} (t,\xi)\|
      & \le D \pfrac{\mu(t)}{\mu(s)}^{-b} \mu(t)^\eps \|x_\phi(t,\xi)\|\\
      & \le D \pfrac{\mu(t)}{\mu(s)}^{-b} \mu(t)^\eps
         C \pfrac{\mu(t)}{\mu(s)}^a \mu(s)^\eps \|\xi\|\\
      & = DC \|\xi\| \mu(s)^{-a+b+\eps} \mu(t)^{a-b+\eps},
   \end{align*}
   we can conclude from~\eqref{eq:a+eps<b} that $V(t,s)^{-1} \phi_{x_\phi}
   (t,\xi)$ converges to zero as $t$ converges to $+ \infty$. Therefore we
   obtain $b)$ letting $t \to + \infty$ in~\eqref{eq:phi-aux1}.

   Assume now that $b)$ holds. Defining a semiflow $F_r$ on $G$ by
      $$ F_r(s,\xi)=(s+r, x_\phi(s+r,\xi)),$$
   we obtain from~\eqref{eq:phi}
   \begin{equation} \label{eq:phi-aux2}
      \phi(s,\xi)
      = - \int_s^{+\infty} V(r,s)^{-1}
         f(F_{r-s}(s,\xi),\phi(F_{r-s}(s,\xi)))\dr.
   \end{equation}
   Replacing $(s, \xi)$ by $(t,x_\phi(t,\xi))$ in~\eqref{eq:phi-aux2} we get
   \begin{align*}
      \phi(t,x_\phi(t,\xi))
      & = - \int_t^{+\infty} V(r,t)^{-1}
         f(F_{r-t}(t,x_\phi(t,\xi)),\phi(F_{r-t}(t,x_\phi(t,\xi))))\dr\\
      & = - \int_t^{+\infty} V(r,t)^{-1}
         f(r,x_\phi(r,\xi),\phi(r,x_\phi(t,\xi)))\dr,
   \end{align*}
   because $F_r$ is a semiflow and this implies
      $$ F_{r-t}(t,x_\phi(t,\xi))
         = F_{r-t}(F_{t-s}(s,\xi))
         = F_{r-s}(s,\xi)
         = (r,x_\phi(r,\xi)).$$

   Then, since $V(t,s) V(r,s)^{-1} = V(t,r)$, we obtain
   \begin{align*}
      V(t,s) \phi(s,\xi)
      & = - \int_s^{+\infty} V(t,s) V(t,r)^{-1} f_{x_\phi,\phi} (r,\xi) \dr\\
      & = - \int_s^t V(t,r) f_{x_\phi,\phi} (r,\xi) \dr
         - \int_t^{+\infty} V(t,r) f_{x_\phi,\phi} (r,\xi) \dr\\
      & = - \int_s^t V(t,r) f_{x_\phi,\phi} (r,\xi) \dr
         + \phi(t,x_\phi(t,\xi))
   \end{align*}
   and thus
      $$ V(t,s) \phi(s,\xi) + \int_s^t V(t,r) f_{x_\phi,\phi}(r,\xi)\dr
         = \phi(t,x_\phi(t,\xi)).$$
  Hence $b)$ implies $a)$.
\end{proof}

Next we will prove, for $\delta$ sufficiently small, the existence of a unique
function $\phi \in \cX$ that verifies~\eqref{eq:phi}.

\begin{lemma} \label{lemma:global:aux2}
   Choosing $\delta > 0$ sufficiently small, there is a unique $\phi \in \cX$
   such that~\eqref{eq:phi} holds for every $s \ge 0$ and every $\xi \in
   E(s)$.
\end{lemma}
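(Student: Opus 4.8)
The plan is to recast~\eqref{eq:phi} as a fixed point problem in the complete metric space $\cX$ (Proposition~\ref{prop:X:complete}) and solve it with Banach's fixed point theorem. For each $\phi\in\cX$ and each $s\ge 0$, let $x_\phi\in\cB_s$ be the function given by Lemma~\ref{lemma:global:aux1}, and define an operator $T$ on $\cX$ by
$$ (T\phi)(s,\xi) = -\int_s^{+\infty} V(r,s)^{-1} f_{x_\phi,\phi}(r,\xi)\dr .$$
The computation at the beginning of the proof of Lemma~\ref{lemma:global:equiv} shows that the integral is absolutely convergent, so $T$ is well defined. A function $\phi\in\cX$ satisfies~\eqref{eq:phi} exactly when $T\phi=\phi$, so it suffices to prove that, for $\delta>0$ small enough, $T$ maps $\cX$ into itself and is a contraction for the metric induced by~\eqref{norm:global:X}.

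To see that $T\phi\in\cX$ I would check~\eqref{cond-phi-in-F(s)}--\eqref{cond-phi-2} in turn. Condition~\eqref{cond-phi-in-F(s)} is immediate since $V(r,s)^{-1}$ has range in $F(s)$. Differentiating under the integral sign gives $\partial(T\phi)(s,\xi) = -\int_s^{+\infty} V(r,s)^{-1}\,\partial f_{x_\phi,\phi}(r,\xi)\dr$; because $f(r,0)=0$, $\partial f(r,0)=0$ and $x_\phi(r,0)=0$, the chain rule yields $f_{x_\phi,\phi}(r,0)=0$ and $\partial f_{x_\phi,\phi}(r,0)=0$, which gives~\eqref{cond-phi-0}. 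For~\eqref{cond-phi-1} and~\eqref{cond-phi-2} the key remark is that $x_\phi\in\cB_s$, so the bounds~\eqref{eq:norm:der:f_phi,x} and~\eqref{eq:norm:der:f-f} obtained inside the proof of Lemma~\ref{lemma:global:aux1} apply with $x=x_\phi$. Multiplying by $\|V(r,s)^{-1}\|$, estimated through~\eqref{eq:dich-2}, and integrating, using
$$ \int_s^{+\infty} \mu'(r)\,\mu(r)^{a-b-2\eps-1}\dr = \frac{\mu(s)^{a-b-2\eps}}{b-a+2\eps} ,$$
which is finite exactly because the hypothesis $a+\eps<b$ forces $a-b-2\eps<0$, one gets $\|\partial(T\phi)(s,\xi)\|\le \frac{2CD\delta}{b-a+2\eps}\,\mu(s)^{-\eps}$ and $\|\partial(T\phi)(s,\xi)-\partial(T\phi)(s,\bar\xi)\|\le \frac{7C^2D\delta}{b-a+2\eps}\,\|\xi-\bar\xi\|$. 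Since $\mu(s)\ge 1$, shrinking $\delta$ makes the two right-hand sides at most $1$ and $\|\xi-\bar\xi\|$ respectively, so $T\phi\in\cX$.

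For the contraction, given $\phi,\psi\in\cX$ I would write $(T\phi-T\psi)(s,\xi)$ as the integral of $V(r,s)^{-1}\prts{f_{x_\phi,\phi}(r,\xi)-f_{x_\psi,\psi}(r,\xi)}$, bound the difference of the nonlinearities by~\eqref{cond-f-3}, and split $\|\phi_{x_\phi}(r,\xi)-\psi_{x_\psi}(r,\xi)\|$ through the intermediate term $\phi(r,x_\psi(r,\xi))$ using~\eqref{cond-phi-3} and the definition of $\|\cdot\|'$. Inserting Lemma~\ref{lemma:without-Gronwall} for $\|x_\phi(r,\xi)-x_\psi(r,\xi)\|$ and~\eqref{cond-x-4} for $\|x_\psi(r,\xi)\|$ gives
$$ \|f_{x_\phi,\phi}(r,\xi)-f_{x_\psi,\psi}(r,\xi)\| \le 3C\delta\,\mu'(r)\,\mu(r)^{-3\eps-1} \pfrac{\mu(r)}{\mu(s)}^a \mu(s)^\eps \|\xi\|\cdot\|\phi-\psi\|' ,$$
and the same integral computation as above yields $\|T\phi-T\psi\|'\le \frac{3CD\delta}{b-a+2\eps}\|\phi-\psi\|'$. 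Gathering every smallness requirement (the ones obtained here together with those of Lemmas~\ref{lemma:global:aux1} and~\ref{lemma:without-Gronwall}) into a single threshold for $\delta$, $T$ becomes a contraction; Banach's fixed point theorem then provides the unique $\phi\in\cX$ with $T\phi=\phi$, which is the unique element of $\cX$ satisfying~\eqref{eq:phi}.

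The two estimates above should be essentially bookkeeping, combining the earlier lemmas with one elementary improper integral. The step that deserves genuine care is the assertion that $T\phi$ is of class $C^1$ on all of $G$, rather than merely $C^1$ in $\xi$ for each fixed $s$: this requires justifying differentiation of the improper integral under the integral sign --- through domination of the integrands, locally uniformly in $\xi$, by the integrable rate function $r\mapsto\mu'(r)\mu(r)^{a-b-2\eps-1}$ --- and, implicitly, adequate regularity in $s$ of the map $s\mapsto x_\phi$ furnished by Lemma~\ref{lemma:global:aux1}. That is the only point where more than substitution into previously proved estimates is involved.
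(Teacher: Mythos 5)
Your proposal is correct and follows essentially the same route as the paper: the same operator $\Phi\phi(s,\xi)=-\int_s^{+\infty}V(r,s)^{-1}f_{x_\phi,\phi}(r,\xi)\,dr$ on the complete space $\cX$, verification of \eqref{cond-phi-in-F(s)}--\eqref{cond-phi-2} via \eqref{eq:norm:der:f_phi,x}, \eqref{eq:norm:der:f-f} and \eqref{eq:dich-2}, and the contraction estimate built from \eqref{cond-f-3}, \eqref{cond-phi-3}, Lemma~\ref{lemma:without-Gronwall} and \eqref{cond-x-4}, with the same constants $\frac{2CD\delta}{|a-b-2\eps|}$, $\frac{7C^2D\delta}{|a-b-2\eps|}$ and $\frac{3CD\delta}{|a-b-2\eps|}$. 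Your closing remark about justifying differentiation under the improper integral and regularity in $s$ is a fair observation on a point the paper passes over silently, but it does not change the argument.
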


\begin{proof}
   Let $\Phi$ be an operator on $\cX$ defined by
   \begin{equation*}
      \prts{\Phi \phi}(s,\xi)
         = - \int_s^{+\infty} V(r,s)^{-1} f_{x_\phi, \phi} (r,\xi) \dr
   \end{equation*}
   for each $\phi \in \cX$ and each $(s,\xi) \in G$. First we will prove that
   $\Phi$ is an operator from $\cX$ into $\cX$.

   Obviously, $(\Phi\phi)(s,\xi) \in F(s)$ for every $(s,\xi) \in G$.
   Furthermore, $\prts{\Phi \phi}(s,0) = 0$ for every $s \ge 0$ because
   $x_\phi(r,0) = 0$ for every $\phi \in \cX$ and every $r \ge s$. Moreover,
   $\Phi \phi$ is of class $C^1$ and
   \begin{equation*}
      \partial (\Phi \phi)(s,\xi)
         = - \int_s^{+\infty} V(r,s)^{-1}
            \partial f_{x_\phi, \phi} (r,\xi) \dr.
   \end{equation*}
   By~\eqref{cond-f-0} we have $\partial (\Phi \phi)(s,0) = 0$ for every $s
   \ge
   0$.

   From~\eqref{eq:dich-2} and~\eqref{eq:norm:der:f_phi,x} we have
   \begin{align*}
      \|\partial (\Phi \phi)(s,\xi)\|
      & \le \int_s^{+\infty} \|V(r,s)^{-1}\| \cdot
            \|\partial f_{x_\phi, \phi} (r,\xi)\| \dr\\
  \dr\\
      & \le 2 C D \delta \mu(s)^{-a+b + \eps}
         \int_s^{+\infty} \mu'(r) \mu(r)^{a-b-2\eps -1} \dr\\
      & \le \dfrac{2 C D \delta}{|a - b - 2 \eps|}
   \end{align*}
   and choosing $\delta < \dfrac{|a - b - 2 \eps|}{2 C D}$ we obtain for every
   $s \ge 0$ and every $\xi \in E(s)$
      $$ \|\partial (\Phi \phi)(s,\xi)\| \le 1.$$

   It follows from~\eqref{eq:dich-2} and~\eqref{eq:norm:der:f-f} that
   \begin{align*}
      & \|\partial (\Phi \phi)(s,\xi) - \partial (\Phi \phi)(s,\bar\xi)\| \\
      & \le \int_s^{+\infty} \|V(r,s)^{-1}\| \cdot
            \|\partial f_{x_\phi, \phi} (r,\xi)
               - \partial f_{x_\phi, \phi} (r,\bar\xi)\| \dr\\
  \mu(s)^{2\eps}
      & \le 7 C^2 D \delta \mu(s)^{-a+b+2\eps}\|\xi-\bar\xi\|
         \int_s^{+\infty} \mu'(r) \mu(r)^{a-b-2\eps -1} \dr \\
      & = \dfrac{7 C^2 D \delta}{|a -b -2\eps|} \|\xi-\bar\xi\|.
   \end{align*}
   Therefore, if $\delta \le \dfrac{|a -b  -2 \eps|}{7 C^2 D}$, we get
      $$ \|\partial (\Phi \phi)(s,\xi) - \partial (\Phi \phi)(s,\bar\xi)\|
         \le \|\xi-\bar\xi\|$$
   for every $s \ge 0$ and every $\xi,\bar\xi \in E(s)$.

   Hence, $\Phi \phi$
   satisfies~\eqref{cond-phi-in-F(s)},~\eqref{cond-phi-0},~\eqref{cond-phi-1}
   and~\eqref{cond-phi-2} for every $\phi \in \cX$ and this proves that
   $\Phi$ is an operator from $\cX$ into $\cX$.

   To finish the proof we will verify that $\Phi$ is a contraction for $\delta$
   sufficiently small. Let $\phi, \psi \in \cX$ and $(s,\xi) \in \R^+_0 \times
   E(s)$. By~\eqref{cond-f-3},~\eqref{cond-phi-3},~\eqref{eq:norm_x_phi-x_psi}
   and~\eqref{cond-x-4}, for $r \ge s$, we have
   \begin{align*}
      & \|f_{x_\phi,\phi}(r,\xi) - f_{x_\psi,\psi}(r,\xi)\|\\
      & \le \delta \mu'(r) \ \mu(r)^{-3\eps-1} \prts{\|x_\phi(r,\xi) -
      x_\psi(r,\xi)\|
         + \|\phi_{x_\phi}(r,\xi) - \psi_{x_\psi}(r,\xi)\|}\\
      & \le \delta \mu'(r) \ \mu(r)^{-3\eps-1} \prts{2 \|x_\phi(r,\xi) -
      x_\psi(r,\xi)\|
         + \|\phi_{x_\phi}(r,\xi) - \psi_{x_\phi}(r,\xi)\|}\\
      & \le \delta \mu'(r) \ \mu(r)^{-3\eps-1} \prts{2 \|x_\phi(r,\xi) -
      x_\psi(r,\xi)\|
         + \|\phi- \psi\|' \|x_\phi(r,\xi) \|}\\
      & \le 3 C \delta \mu'(r) \ \mu(r)^{-3\eps-1} \pfrac{\mu(r)}{\mu(s)}^a
         \mu(s)^{\eps} \|\xi\| \cdot \|\phi - \psi\|'.
   \end{align*}
   The last inequality and~\eqref{eq:dich-2} implies that
   \begin{align*}
      & \|(\Phi \phi)(s,\xi) - \Phi \psi (s,\xi)\|\\
      & \le \int_s^{+\infty} \|V(r,s)^{-1}\| \cdot
         \|f_{x_\phi,\phi}(r,\xi) - f_{x_\psi,\psi}(r,\xi)\| \dr\\
      & \le 3 C D \delta \, \|\xi\| \cdot \|\phi - \psi\|' \mu(s)^{-a+b+\eps}
         \int_s^{+\infty} \mu'(r) \mu(r)^{a - b - 2 \eps -1} \dr\\
      & \le \dfrac{3 C D \delta }{|a-b-2\eps|} \, \|\xi\|
         \cdot \|\phi - \psi\|'
   \end{align*}
    and this implies that
      $$ \| \Phi \phi - \Phi \psi\|'
         \le \dfrac{3 C D \delta }{|a-b-2\eps|} \, \|\phi - \psi\|'.$$
    Therefore, if $\delta < \dfrac{|a-b-2\eps|}{3 C D}$, we conclude that
    $\Phi$ is a contraction in $\cX$ and that the unique fixed point of $\Phi$
    verifies~\eqref{eq:phi}.
\end{proof}

\begin{proof}[Proof of Theorem \ref{thm:global}]
   By Lemma~\ref{lemma:global:aux1}, for each $\phi\in \cX$ there is
   a unique sequence $x_\phi \in \cB_s$ satisfying
   identity~\eqref{eq:split-2a}. By Lemma~\eqref{lemma:global:equiv}, solving
   equation~\eqref{eq:split-2b} with $x=x_\phi$ is equivalent to solve
   equation~\eqref{eq:phi}. Finally, by Lemma~\ref{lemma:global:aux2},
   there is a unique solution of \eqref{eq:phi}. Therefore, we obtain a unique
   solution of equation~\eqref{eq:split-2b} with $x = x_\phi$ for $\delta$
   sufficiently small.

   To prove that, for the function $\phi$ that solves~\eqref{eq:split-2b} with
   $x = x_\phi$, the graph $\cV_\phi$ is a $C^1$ manifold we have to consider
   the map
      $$ S \colon \R^+_0 \times E(0) \to \R^+_0 \times X$$
   defined by
      $$ S(t,\xi) = \Psi_t(0,\xi,\phi(0,\xi)).$$
   The map $S$ is of class $C^1$ because $\phi(0,\xi)$ is also of class $C^1$.
   Moreover, if $S(t,\xi) = S(t',\xi')$, then $t = t'$ and $\xi = \xi'$. Thus,
   $S$ is a parametrization of class $C^1$ of the set
   $\cV_\phi$. Therefore, $\cV_\phi$ is a $C^1$ manifold.

   Finally, for every $(s, \xi), (s, \bar \xi) \in G$ and every $t \ge s$,
   from~\eqref{cond-phi-3} we have
   \begin{align*}
      \| \Psi_{t-s}(p_{s,\xi}) - \Psi_{t-s} (p_{s,\bar\xi})\|
      & = \|\prts{t, x_\phi(t,\xi), \phi_{x_\phi} (t,\xi)}
         - \prts{t, x_\phi(t,\bar\xi), \phi_{x_\phi} (t,\bar\xi)}\|\\
      & \le \|x_\phi(t,\xi) - x_\phi(t,\bar\xi)\|
         + \|\phi_{x_\phi}(t,\xi) - \phi_{x_\phi}(t,\bar\xi)\| \\
      & \le 2 \|x_\phi(t,\xi) - x_\phi(t,\bar\xi)\| \\
      & \le 2 C \pfrac{\mu(t)}{\mu(s)}^a \mu(s)^\eps
               \|\xi - \bar\xi\|
   \end{align*}
   and this proves~\eqref{eq:thm:global-1}. To prove~\eqref{eq:thm:global-2},
   we note that
   \begin{align*}
      \|\partial \phi_{x_\phi}(t,\xi) - \partial \phi_{x_\phi}(t,\bar\xi)\|
      & \le \|\partial \phi(t,x_\phi(t,\xi)) - \partial
      \phi(t,x_\phi(t,\bar\xi))\|
         \cdot \|\partial x_\phi(t,\xi)\|\\
      & \phantom{ \le } + \|\partial \phi(t,x_\phi(t,\bar\xi))\|
         \cdot \|\partial x_\phi(t,\xi) - \partial x_\phi(t,\bar\xi)\|\\
      & \le \|x_\phi(t,\xi) - x_\phi(t,\bar\xi)\|
         \cdot \|\partial x_\phi(t,\xi)\|\\
      & \phantom{ \le } +
         \|\partial x_\phi(t,\xi) - \partial x_\phi(t,\bar\xi)\|\\
      & \le (C + C^2) \pfrac{\mu(t)}{\mu(s)}^a \mu(s)^{2\eps} \|\xi -
      \bar\xi\|.
   \end{align*}
   This last estimate is a consequence of the chain
   rule,~\eqref{cond-phi-2},~\eqref{cond-phi-1},~\eqref{cond-x-3},~\eqref{cond-x-1}
   and~\eqref{cond-x-2}. Therefore, by the last estimate and~\eqref{cond-x-2},
   we
   have
   \begin{align*}
      \| \partial\Psi_{t-s}(p_{s,\xi}) - \partial\Psi_{t-s} (p_{s,\bar\xi})\|
      & = \|\partial\prts{t, x_\phi(t,\xi), \phi_{x_\phi} (t,\xi)}
         - \partial \prts{t, x_\phi(t,\bar\xi), \phi_{x_\phi} (t,\bar\xi)}\|\\
      & \le \|\partial x_\phi(t,\xi) - \partial x_\phi(t,\bar\xi)\|
         + \|\partial \phi_{x_\phi}(t,\xi) - \partial
         \phi_{x_\phi}(t,\bar\xi)\| \\
      & \le (2C + C^2) \pfrac{\mu(t)}{\mu(s)}^a \mu(s)^{2\eps} \|\xi -
      \bar\xi\|.
   \end{align*}
   This completes the proof of the theorem.
\end{proof}

\bibliographystyle{elsart-num-sort}

\end{document}